\documentclass[11pt,reqno]{amsart}
\usepackage[T1]{fontenc}
\usepackage{lmodern}
\usepackage[letterpaper,textwidth=6.35in,textheight=8.8in,centering]{geometry}
\usepackage{mathtools,amssymb,booktabs,array}
\usepackage[expansion=false]{microtype}
\usepackage{enumitem}
\usepackage{listings}
\usepackage[hidelinks]{hyperref}
\hypersetup{pdftitle={A local cohomology obstruction to small Cohen--Macaulay modules},pdfauthor={Liang Chen},pdfsubject={Proposed proof for independent mathematical verification}}
\numberwithin{equation}{section}
\newtheorem{theorem}{Theorem}[section]
\newtheorem{proposition}[theorem]{Proposition}
\newtheorem{lemma}[theorem]{Lemma}
\newtheorem{corollary}[theorem]{Corollary}
\newtheorem{mainclaim}[theorem]{Main claim}
\newtheorem{consequence}[theorem]{Consequence}
\theoremstyle{definition}

\theoremstyle{remark}
\newtheorem{remark}[theorem]{Remark}
\newcommand{\C}{\mathbb C}
\newcommand{\Z}{\mathbb Z}
\newcommand{\PP}{\mathbb P}
\newcommand{\A}{\mathbb A}
\newcommand{\OO}{\mathcal O}
\newcommand{\FF}{\mathcal F}
\newcommand{\GG}{\mathcal G}
\newcommand{\QQ}{\mathcal Q}
\newcommand{\mm}{\mathfrak m}
\newcommand{\bb}{\mathfrak b}
\newcommand{\rk}{\operatorname{rank}}
\newcommand{\Spec}{\operatorname{Spec}}

\newcommand{\Tot}{\operatorname{Tot}}
\newcommand{\Bl}{\operatorname{Bl}}
\newcommand{\depth}{\operatorname{depth}}

\newcommand{\Ass}{\operatorname{Ass}}
\newcommand{\Tor}{\mathcal{T}or}
\newcommand{\smax}{s_{\max}}
\newcommand{\smin}{s_{\min}}
\newcommand{\Hom}{\operatorname{Hom}}
\newcommand{\Ext}{\operatorname{Ext}}
\newcommand{\im}{\operatorname{im}}

\newcommand{\len}{\ell}

\title[An obstruction to small Cohen--Macaulay modules]{A local cohomology obstruction to small\newline Cohen--Macaulay modules}
\author[Liang Chen]{Liang Chen}
\address{School of Mathematical Sciences, Capital Normal University,
Beijing 100048, China}
\email{2210501003@cnu.edu.cn}
\subjclass[2020]{Primary 13C14, 13D45; Secondary 14J29, 14J60}
\keywords{Small Cohen--Macaulay module, local cohomology, reflexive sheaf, elementary transformation, Harder--Narasimhan filtration, section ring}

\begin{document}
\begin{abstract}
We propose a local-cohomology obstruction to the existence of small
Cohen--Macaulay modules over completed section rings of surfaces.
Let $Y$ be a smooth connected complex projective surface, and let $H$ be an
ample globally generated divisor with $K_Y\equiv4H$ numerically. Put
$c(Y,H)=15H^2/8-\chi(\OO_Y)$. The main claim is that, when
$c(Y,H)>0$, every nonzero finite reflexive module $M$ over the completed
vertex local ring of $\bigoplus_{n\geq0}H^0(Y,\OO_Y(nH))$ satisfies
$\dim_\C H^2_\mm(M)\geq c(Y,H)\rk M$.
The argument combines elementary transformations of reflexive extensions,
Harder--Narasimhan slopes, Bogomolov's inequality, and a two-section
Koszul estimate. A fixed-source kernel estimate transfers the resulting
cohomology to the punctured spectrum without requiring a grading on $M$.
For the degree-six member of the six-line Hirzebruch--Kummer family,
$H^2=1080$ and $\chi(\OO_Y)=1926$, so the bound is $99\rk M$.
We give an explicit complete-intersection model for this application and
spell out the argument leading to the claimed nonexistence of nonzero
finite maximal Cohen--Macaulay modules.
\end{abstract}
\maketitle

\section{Introduction and the main statement}\label{sec:introduction}

A \emph{small Cohen--Macaulay module} over a Noetherian local ring
$(R,\mm)$ is a nonzero finitely generated $R$-module $M$ such that
$\depth_R M=\dim R$. The small Cohen--Macaulay module conjecture asks
whether every complete Noetherian local domain has such a module.
Finite generation is essential: the existence of big Cohen--Macaulay
modules is a different question. We use the terminology of
Bhatt--Hochster--Ma \cite[Section 1.3]{BHM2026}.

The distinction between local and graded existence is equally important.
For a polarized smooth projective variety, graded maximal Cohen--Macaulay
modules over its section ring are related to bundles with vanishing
intermediate cohomology. A finite module over the completed vertex local
ring need not arise from a graded module. Consequently, a graded
nonexistence theorem does not, by itself, settle the local conjecture.
Anghel's preprint \cite{Anghel2026} gives graded nonexistence results and
explicitly retains this distinction. Its six-line Hirzebruch--Kummer
family supplies the polarized surface used below; neither the surface
nor that graded nonexistence theorem is claimed as new here.

This manuscript records a proposed argument addressing arbitrary finite
modules over the completed local ring. Its central point is to work with
reflexive extensions on a resolution and to change the extension while
keeping its restriction to the punctured spectrum fixed. The required
changes are elementary transformations along the exceptional surface.
After a slope normalization, a negative Euler characteristic forces some
first cohomology to survive all allowed poles along the exceptional
divisor. We give the lemmas and their proofs separately so that this
passage can be checked independently of the numerical example.

\begin{mainclaim}\label{thm:main}
Let $Y$ be a smooth connected complex projective surface, and let $H$ be
an ample globally generated Cartier divisor on $Y$. Assume that
\begin{equation}\label{eq:main-hypotheses}
 K_Y\equiv4H,
 \qquad
 c(Y,H):=\frac{15}{8}H^2-\chi(\OO_Y)>0.
\end{equation}
Set
\begin{equation}\label{eq:section-ring}
 B=\bigoplus_{n\geq0}H^0(Y,\OO_Y(nH)),\qquad
 \bb=B_{>0},\qquad R=\widehat{B_\bb},\qquad \mm=\bb R.
\end{equation}
Then $R$ is a three-dimensional complete normal local domain. For every
nonzero finitely generated reflexive $R$-module $M$, the module
$H^2_\mm(M)$ has finite length, and one has
\begin{equation}\label{eq:main-bound}
 \dim_\C H^2_\mm(M)\ \geq\ c(Y,H)\rk_R M.
\end{equation}
In particular, $R$ admits no nonzero finitely generated maximal
Cohen--Macaulay module.
\end{mainclaim}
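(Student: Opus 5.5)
We reduce the local statement to a sheaf-theoretic estimate on a resolution of $\Spec R$ and then bound first cohomology from below using Riemann--Roch together with slope control. Let $X=\Spec R$ with closed point $x$ and punctured spectrum $U=X\setminus\{x\}$. We use the standard model of the cone: the total space $\Tot(\OO_Y(-H))$, completed along its zero section, gives a proper birational map $\pi\colon\widetilde X\to X$. Its exceptional divisor is $E\cong Y$, with $\OO_E(E)\cong\OO_Y(-H)$, and $\pi$ is an isomorphism over $U$. Normality, completeness and $\dim R=3$ follow from $H$ being ample and globally generated, with $\dim Y=2$. For reflexive $M$ the sheaf $\widetilde M|_U$ is reflexive on a smooth threefold region, hence locally free off finitely many points. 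The identity $H^2_\mm(M)\cong H^1(U,\widetilde M)$ then gives finite length via the formal-function theorem along $E$. The final assertion follows at once from \eqref{eq:main-bound}: a maximal Cohen--Macaulay module has $H^2_\mm(M)=0$, while a nonzero reflexive module has positive rank and $c(Y,H)>0$.

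For the bound itself, choose any reflexive extension $\FF$ of $\widetilde M|_U$ to $\widetilde X$. Then $H^1(U,\widetilde M)=\varinjlim_k H^1(\widetilde X,\FF(kE))$. By the theorem on formal functions, each $H^1(\widetilde X,\FF(kE))$ is a limit over infinitesimal neighbourhoods of $E$, so everything reduces to the graded pieces $\FF(kE)|_E\otimes\OO_Y(jH)$ on $Y$. Elementary transformations along $E$ replace $\FF$ by another reflexive extension without changing $\FF|_U$. Using them we aim to normalize the restriction $F=\FF|_E$ so that all its Harder--Narasimhan slopes with respect to $H$ lie in a window of length $H^2$. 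The Harder--Narasimhan filtration dictates which transformations to perform: if the spread of slopes is too large, replace $\FF$ by the kernel of $\FF\to F\to F/F_{\ge s}$. Each such step increases $\mu_{\min}$ and reduces a bounded numerical invariant, so the process terminates.

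With $F$ normalized, twist so that its slopes lie in $(-H^2,0]$, i.e.\ in the range where both $H^0(F)$ and $H^2(F)=H^0(F^\vee(K_Y))^\vee$ are controlled. For $H^0$, take two general sections $s,t\in H^0(\OO_Y(H))$ and use the Koszul complex to bound $h^0(F(jH))$ from above in terms of the slopes; this is the two-section Koszul estimate. Serre duality with $K_Y\equiv4H$ gives the corresponding bound for $h^2$. Riemann--Roch computes
\[
\chi(F)=\rk F\,\chi(\OO_Y)+\tfrac12 c_1(F)\cdot(c_1(F)-K_Y)-c_2(F),
\]
and Bogomolov's inequality $c_2\ge\frac{r-1}{2r}c_1^2$ (applied to the semistable factors, and combined with the Hodge index theorem) gives an upper bound on $\chi$. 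In that bound, $-\tfrac12 c_1\cdot K_Y$ with $c_1\cdot H$ in the window contributes at most about $-\tfrac{15}{8}H^2\rk F$, after combining with the $h^0$ and $h^2$ bounds. The outcome should be $h^1(F)\ge -\chi+h^0+h^2\ge c(Y,H)\rk F$, and we expect to choose the constant $15/8$ exactly so that this optimization closes.

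The main obstacle is transferring this $h^1$ on $E$ to $H^1(U,\widetilde M)$, uniformly in the allowed poles $kE$, without a grading on $M$. The classes must survive the direct limit over $k$ and the inverse limit over thickenings. We plan a fixed-source kernel estimate. For one normalized $\FF$, show that the map $H^1(\widetilde X,\FF)\to H^1(\widetilde X,\FF(kE))$ has kernel controlled by $\bigoplus_{0<i\le k}H^0(F(iH))$, and that these terms vanish by the slope normalization together with the Koszul bound. It then remains to show that $H^1(\widetilde X,\FF)\to H^1(F)$ is surjective, which reduces to $H^2$ vanishing of $\FF(-E)$ on the formal neighbourhood, again from the slope window. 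This step is where we expect the argument to be most delicate, and where the precise window chosen in the normalization must be fixed.
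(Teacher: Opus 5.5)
Your architecture matches the paper's: a reflexive extension on the resolution, elementary transformations along $E$ to put the Harder--Narasimhan slopes of $F=\FF|_E$ in a width-one window, Riemann--Roch with Bogomolov and Hodge index, a Koszul estimate, and a fixed-source kernel estimate into $H^1(U,\widetilde M)=H^2_\mm(M)$. However, the two steps that actually produce $c(Y,H)$ are set up in a way that cannot work. Write $T(j)=T\otimes\OO_Y(jH)$.

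\textbf{The window position is forced.} The position of the window is not a free choice. For an HN factor $Q$ of normalized slope $s=c_1(Q)\cdot H/(\rk Q\,H^2)$, Riemann--Roch, $\Delta(Q)\ge0$, the Hodge index theorem and $K_Y\equiv4H$ give
\[
 \frac{\chi(Q)}{\rk Q}\le\chi(\OO_Y)+\frac{H^2}{2}(s^2-4s)
 =\chi(\OO_Y)-2H^2+\frac{H^2}{2}(s-2)^2 .
\]
So the window must be centered at $2=\tfrac12 s(\omega_Y)$, namely $\tfrac32\le s<\tfrac52$; this is where $\tfrac{15}{8}=2-\tfrac12\cdot\tfrac14$ comes from. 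In your window $(-1,0]$ the right-hand side lies between $\chi(\OO_Y)$ and $\chi(\OO_Y)+\tfrac52H^2$. There $-\tfrac12c_1\cdot K_Y=-2c_1\cdot H\ge0$, the opposite sign to what you claim, so no negative bound on $\chi(F)$ results. Since twisting changes $\chi$, the normalization must aim at the absolute window $[\tfrac32,\tfrac52)$, not merely at small spread; integer twists alone cannot move an arbitrary width-one window into it. The paper does this as follows. First twist so that $\smax<\tfrac52$. Then, while $\smin<\tfrac32$, transform along the last HN quotient $Q$. Since $Q(1)$ has slope $<\tfrac52$, the maximum slope stays below $\tfrac52$. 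Meanwhile $c_1\cdot H$ grows by $\rk(Q)H^2$ at each step but stays below $\tfrac52\rk(F)H^2$, which forces termination.

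\textbf{The losses cannot vanish.} More seriously, your transfer step asks two groups of terms to vanish. One group is $H^0(F(-i))$ for $i\ge1$, which controls the kernel into $H^1(\widetilde X,\FF(kE))$; note $\FF(iE)|_E\cong F(-i)$, not $F(i)$, since $\OO_E(E)\cong\OO_Y(-H)$. The other is $H^2(\widetilde X,\FF(-E))$, which you need for surjectivity onto $H^1(F)$. No window achieves both. Killing all $h^0(F(-j))$, $j\ge1$, by slopes needs $\smax(F)<1$. Killing all $h^2(F(j))=h^0\bigl((F^\vee\otimes\omega_Y)(-j)\bigr)$, $j\ge1$, needs $\smin(F)>3$. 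In the forced window these groups are genuinely nonzero: for $\FF=\OO_{\widetilde X}(-2E)$ one has $F=\OO_Y(2H)$ and $h^0(F(-1))=h^0(\OO_Y(H))\ge3$.

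\textbf{The missing idea: bound the losses instead of killing them.} The window kills only the twists with $|j|\ge3$. The two-section Koszul inequality $h^0(T)\ge h^0(T(-1))+h^0(T(-2))$ holds whenever $h^0(T(-3))=0$. It comes from the left-exact complex $0\to T(-2)\to T(-1)^{\oplus2}\to T$ for two general sections of $\OO_Y(H)$, used for $T$ and for $T(-1)$. Apply it to $T=F$ and to $T=F^\vee\otimes\omega_Y$, whose maximal slope is $4-\smin(F)\le\tfrac52$. This gives
\[
 \sum_{j<0}h^0(F(j))\le h^0(F),\qquad \sum_{j>0}h^2(F(j))\le h^2(F).
\]
Descending induction from Serre vanishing bounds $\len H^2(\widetilde X,\FF(-E))$ by the second sum. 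Hence the image of $H^1(\widetilde X,\FF)$ in $H^1(F)$ has length at least $h^1(F)-h^2(F)$. The kernel of $H^1(\widetilde X,\FF)\to H^1(U,\FF|_U)$ equals the kernel into $H^1(\widetilde X,\FF(2E))$, because later transition maps are injective and a class dying in the colimit dies at a finite stage. That kernel has length at most $h^0(F)$. So the image in $H^2_\mm(M)$ has length at least
\[
 h^1(F)-h^2(F)-h^0(F)=-\chi(F)\ge c(Y,H)\rk M .
\]
In your inequality $h^1\ge-\chi+h^0+h^2$, the terms $h^0$ and $h^2$ play the wrong role. They are the allowance for the losses, and they cancel against $h^1$ to leave exactly $-\chi$.

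\textbf{Smaller points.} Finite length of $H^2_\mm(M)$ does not follow from the colimit description, since a colimit of finite-length modules need not have finite length. The paper uses local duality over a regular presentation of $R$, together with freeness of $M$ on the punctured spectrum. That freeness holds on all of $U$, because its points have codimension at most $2$; $U$ is not a threefold region. Finally, the last assertion needs the fact that a maximal Cohen--Macaulay module over $R$ is torsion-free and $S_2$, hence reflexive.
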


The proposed proof of Main claim~\ref{thm:main} appears in
Section~\ref{sec:main-proof}. In the present setting the module
$H^2_\mm(M)$ has finite length, as shown in
Lemma~\ref{lem:finite-deficiency}. Thus the left-hand side of
\eqref{eq:main-bound} is its ordinary finite dimension over $\C$,
equivalently its length over $R$.

Here is the explicit application. In $\PP^5_\C$, let
\begin{equation}\label{eq:W}
 W=V\bigl(u_3^6-u_0^6+u_1^6,\;
          u_4^6-u_0^6+u_2^6,\;
          u_5^6-u_1^6+u_2^6\bigr).
\end{equation}
Let $\rho:Y\to W$ be the blow-up of its singular points, let
$A=\rho^*\OO_W(1)$, and let $E$ be the reduced exceptional divisor.
The geometric assertions in the next statement are verified in
Section~\ref{sec:example}.

\begin{consequence}[Assuming Main claim~\ref{thm:main}]\label{cor:example}
For the surface in \eqref{eq:W}, the divisor $H=3A-E$ is ample and
globally generated, and
\begin{equation}\label{eq:example-numbers}
 K_Y\sim4H,\qquad H^2=1080,\qquad \chi(\OO_Y)=1926.
\end{equation}
Let $R$ be its completed section ring as in \eqref{eq:section-ring}.
Then every nonzero finite reflexive $R$-module satisfies
\begin{equation}\label{eq:99}
 \dim_\C H^2_\mm(M)\geq99\rk_R M.
\end{equation}
Consequently, $R$ has depth $2$, is quasi-Gorenstein, and has an isolated
singularity, but has no nonzero finite maximal Cohen--Macaulay module.
Subject to Main claim~\ref{thm:main}, this ring therefore gives a
counterexample to the small Cohen--Macaulay module conjecture.
\end{consequence}

\begin{remark}[Numerical and graded obstructions]\label{rem:numerical}
The hypothesis on the canonical divisor in Main claim~\ref{thm:main}
is numerical. Its uses are the equalities
$\nu\cdot K_Y=4\nu\cdot H$ in the surface Riemann--Roch estimate
and $s(\omega_Y)=K_Y\cdot H/H^2=4$ in the dual-slope estimate.
No isomorphism $\omega_Y\simeq L^4$ is needed for that assertion.
The stronger linear equivalence in \eqref{eq:example-numbers}
will be used separately to identify the canonical module in
Consequence~\ref{cor:example}.

Set $d=H^2$ and $\sigma(Y)=K_Y^2-8\chi(\OO_Y)$, as in
\cite[Section 1 and Remark 1.6]{Anghel2026}. Under $K_Y\equiv4H$,
\[
 8c(Y,H)=15d-8\chi(\OO_Y)=\sigma(Y)-d.
\]
Consequently $c(Y,H)>0$ is exactly the degree--signature inequality
$d<\sigma(Y)$ from Anghel's graded nonexistence criterion, restricted
to this numerical subcanonical class. The additional assertion proposed
here concerns arbitrary finite modules after localization and completion;
it is not a consequence of the graded criterion alone.
Independently of Main claim~\ref{thm:main},
Lemma~\ref{lem:noncyclic-deficiency} gives
$\len_RH^2_\mm(R)\geq234$ for the explicit ring and shows that its second
deficiency module is not cyclic. Remark~\ref{rem:existence-criteria}
explains why the specific existence criteria of Shimomoto--Tavanfar
and Xie do not apply; this observation is not an alternative
nonexistence argument.
\end{remark}

\subsection*{Outline of the argument}
The completed cone has a proper regular model $\pi:X\to\Spec R$ with
exceptional divisor $Y$ and conormal bundle $\OO_Y(H)$. Starting with a
finite reflexive module, we extend its bundle on the punctured spectrum
to a reflexive sheaf $\FF$ on $X$. Its restriction $V=\FF|_Y$ is
torsion-free, although it need not be locally free.

Elementary transformations, in the sense related to the constructions
of Chen--Sun \cite{ChenSun2020}, arrange that every normalized
Harder--Narasimhan slope of $V$ belongs to $[3/2,5/2)$. The width-one
window is adapted to the unit slope shift induced by the conormal twist.
Among intervals of width one, centering at $2$, half the normalized
canonical slope, minimizes the largest value of $(s-2)^2$ and bounds it
by $1/4$. Thus the surface Riemann--Roch estimate has coefficient
$2-\tfrac12\cdot\tfrac14=15/8$. This explains the constant without
asserting that no other method could give a stronger bound.
Riemann--Roch, Bogomolov's inequality, and the Hodge index theorem then
give $\chi(V)\leq-c(Y,H)\rk V$. A Koszul estimate bounds the two quantities
\[
 \sum_{j<0}h^0(V(jH))\quad\text{and}\quad
 \sum_{j>0}h^2(V(jH))
\]
by $h^0(V)$ and $h^2(V)$, respectively. These control the total cohomology
that can be lost when one passes from the exceptional divisor to $X$ and
then from $X$ to its complement. The resulting lower bound is
$-\chi(V)$.

The argument uses neither a grading on the module nor a splitting of
the Harder--Narasimhan filtration. In particular, the claimed local
conclusion is not obtained by assuming that all maximal Cohen--Macaulay
modules can be made graded.

\subsection*{Conventions}
All rings are commutative with identity, and all modules and coherent
sheaves under consideration are finite unless explicitly stated
otherwise. For a coherent sheaf $T$ on $Y$, write
$T(j)=T\otimes\OO_Y(jH)$ and $h^i(T)=\dim_\C H^i(Y,T)$.
For a finite-length $R$-module, its length equals its dimension over the
residue field $\C$. Reflexive duals are taken with respect to the
structure sheaf or the ring in question. Linear and numerical equivalence
of divisors are denoted by $\sim$ and $\equiv$, respectively.
Main claim~\ref{thm:main} requires only numerical equivalence, whereas
the canonical-module identification in Consequence~\ref{cor:example}
uses the stated linear equivalence.

\section{The cone, its completion, and reflexive extensions}\label{sec:cone}

Put $L=\OO_Y(H)$, and let
\[
 X_0=\Tot(L^{-1})
     =\Spec_Y\!\left(\bigoplus_{n\geq0}L^n\right).
\]
The zero section will again be denoted by $Y$.

\begin{proposition}\label{prop:cone}
There is a proper morphism $q:X_0\to\Spec B$ which is an isomorphism
away from the zero section and the vertex. Moreover,
\begin{equation}\label{eq:conormal}
 \bb\OO_{X_0}=\OO_{X_0}(-Y),\qquad
 \OO_{X_0}(-Y)|_Y\simeq L.
\end{equation}
The ring $B$ is a finitely generated normal domain of dimension $3$,
and $\OO_{X_0}(-Y)$ is $q$-ample. After the base change
\[
 X=X_0\times_{\Spec B}\Spec R,
\]
one obtains a proper morphism $\pi:X\to\Spec R$ from a regular
three-dimensional scheme, with
\begin{equation}\label{eq:punctured}
 U:=X\setminus Y\simeq\Spec R\setminus\{\mm\},\qquad
 \OO_X(-Y)|_Y\simeq L.
\end{equation}
The scheme-theoretic closed fibre is $Y$, and $\OO_X(-Y)$ is
$\pi$-ample.
\end{proposition}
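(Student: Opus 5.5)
We follow the standard affine-cone construction, organized around the graded algebra $S=\bigoplus_{n\ge0}L^n$ on $Y$, and then base change to the completion.

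First, $X_0=\Spec_Y S$ is affine over $Y$, and $\Gamma(X_0,\OO_{X_0})=\bigoplus_{n\ge0}H^0(Y,L^n)=B$. So the canonical morphism $q:X_0\to\Spec B$ exists. Since $H$ is ample, $B$ is finitely generated: a Veronese subring $B^{(k)}$ is generated in degree one for $k\gg0$, and $B$ is finite over it. Then $q$ factors as the closed embedding of $X_0$ into $\Tot(L^{-1})$, compactified to $\PP(\OO\oplus L^{-1})$ in the usual way, followed by contraction of the zero section. The cleanest route is to write $X_0=\Spec_{\operatorname{Proj}} $ of the Rees-type algebra $\bigoplus_n \bb^n$. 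Concretely, $q$ is projective because $X_0\simeq \operatorname{Proj}_B\bigl(\bigoplus_{m\ge0}\bb^{(m)}\bigr)$, where $\bb^{(m)}=B_{\ge m}$ is the filtration by degree. Its relative $\OO(1)$ is $\OO_{X_0}(-Y)$, which is therefore $q$-ample.

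Away from the zero section, $X_0\setminus Y$ is the $\C^*$-bundle $\Spec_Y\bigoplus_{n\in\Z}L^n$. This bundle maps isomorphically onto $\Spec B\setminus V(\bb)$, because for $f\in B_k$ one has $B[f^{-1}]=\Gamma(D(f)\cap(X_0\setminus Y),\OO)$, using that $L^k$ is trivialized where $f\ne0$. Global generation of $H$ ensures these $D(f)$ cover the complement of $Y$, since $L$ is generated by sections of degree one.

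The ideal statement $\bb\OO_{X_0}=\OO_{X_0}(-Y)$ again uses global generation. The zero section is cut out by the ideal $\bigoplus_{n\ge1}L^n=L\cdot S$, and $B_1\otimes\OO_Y\to L$ is surjective, so $B_1\OO_{X_0}$ already equals $L\cdot S$. Its restriction to $Y$ is $L\cdot S/L^2S=L$, which is the conormal identification. Normality of $B$ follows since $X_0$ is smooth, hence normal, $q_*\OO_{X_0}=B$, and $q$ is proper birational onto an integral scheme. Dimension $3$ follows because $B$ is the coordinate ring of an affine variety birational to the three-dimensional $X_0$.

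For the completion, base change along the flat map $\Spec R\to\Spec B$ preserves properness and relative ampleness. It also preserves the isomorphism off the closed fibre, giving $U\simeq\Spec R\setminus\{\mm\}$, and it gives $\bb\OO_X=\OO_X(-Y)$. Since $\bb$-adic neighbourhoods of $Y$ are unchanged, $Y$ is the scheme-theoretic fibre over $\mm$. This fibre is $V(\bb\OO_X)=V(\OO_X(-Y))=Y$, and the conormal bundle is still $L$.

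The main obstacle is regularity of $X$. The completion map is not of finite type, so smoothness does not transfer formally. Regularity of $X$ follows because $B\to R$ is flat with geometrically regular fibres, $B$ being excellent as a finitely generated $\C$-algebra. Regularity is then inherited by $X_0\times_B R\to X_0$, a regular morphism from a regular scheme. Alternatively, the local rings of $X$ along $Y$ are completions of local rings of $X_0$ along the fibre, while off $Y$ regularity comes from $U\subset\Spec R\setminus\{\mm\}$. Regularity of $U$ in turn uses that $B$ has an isolated singularity and that regularity passes to $\widehat{B_\bb}$ through excellence.

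The remaining point is that $R$ is a normal complete local domain of dimension $3$. Normality of $R$ follows from normality of $B_\bb$ together with excellence. Being a domain then follows from normality plus locality, since a normal local ring is a domain.
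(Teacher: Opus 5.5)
Your proposal is correct in substance. From the isomorphism off the zero section onward it follows the paper: charts $D(f)$ for homogeneous $f$ of positive degree, $\bb\OO_{X_0}=\OO_{X_0}(-Y)$ from global generation, normality of $B$ as global functions on a normal integral scheme, and regularity of $X$ via the regular completion map of the excellent ring $B_\bb$.

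The genuine difference is how you obtain finite generation, properness and $q$-ampleness. The paper chooses $s_0,s_1,s_2\in H^0(Y,L)$ without common zero and gets a finite map $g:Y\to\PP^2$. Hence $B$ is finite over $P=\C[s_0,s_1,s_2]$, and $X_0$ is finite over $\Tot(\OO_{\PP^2}(-1))=\Bl_0\A^3$. Properness of $q$ and relative ampleness of $\OO_{X_0}(-Y)$ are then inherited from the blow-up of $\A^3$, and the same finiteness yields $\dim B=3$ and $\sqrt{(s_0,s_1,s_2)B}=\bb$.

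You instead identify $X_0$ intrinsically with $\operatorname{Proj}_B\bigoplus_{m\ge0}B_{\ge m}$. This gives projectivity of $q$ and $\OO(1)\simeq\OO_{X_0}(-Y)$ at once, and it needs only ampleness of $L$; global generation enters only in $\bb\OO_{X_0}=\OO_{X_0}(-Y)$. The price is that you assert the identification rather than prove it. To complete it you need three things:
\begin{itemize}
\item For $f\in B_k$, $k\ge1$, the degree-zero part of the localization at $ft^k$ is $(B_f)_{\ge0}=\bigoplus_{n\ge0}\Gamma(Y_f,L^n)$, the coordinate ring of $X_0|_{Y_f}$. Its degree-$m$ part is $(B_f)_{\ge m}$, i.e.\ $\OO_{X_0}(-mY)$.
\item These charts cover the Proj, because a power of every bihomogeneous $gt^m$ with $g\in B_n$, $n\ge m\ge1$, is a multiple of such an element: $(gt^m)^n=(g^mt^{mn})\,g^{n-m}$.
\item The algebra $\bigoplus_{n\ge m\ge0}B_nt^m$ is finitely generated over $B$, for instance by Gordan's lemma on the exponent monoid. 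Without this the Proj need not be proper over $\Spec B$.
\end{itemize}
Your Veronese route to finite generation of $B$, via projective normality of large multiples of $L$, is also heavier than the paper's finiteness over $P$.

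Several sentences should be deleted or fixed rather than left to be superseded.
\begin{itemize}
\item The description of $X_0$ through the ordinary Rees algebra $\bigoplus_n\bb^n$ is wrong in general. If $B$ is not generated in degree one, the degree-$n$ part of $\bb^n$ (the span of $n$-fold products from $B_1$) can be strictly smaller than $B_n$. The blow-up of $\bb$ can then be non-normal, so it is not the smooth $X_0$.
\item ``Closed embedding of $X_0$ into $\Tot(L^{-1})$'' is vacuous, since the two are equal.
\item In the regularity step, $X\to X_0$ is a regular morphism \emph{to} a regular scheme, not from one.
\item The local rings of $X$ along $Y$ are not completions of those of $X_0$. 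They are flat local extensions whose closed fibre is the residue field, so they have the same completions; that is what your alternative argument actually needs.
\item You do not justify that $X$ is three-dimensional or that $\dim R=3$. As in the paper, a local equation of $Y$ is a nonzerodivisor with regular surface quotient, $U$ is the punctured spectrum, and completion preserves $\dim B_\bb=3$.
\end{itemize}
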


\begin{proof}
Choose three sections $s_0,s_1,s_2\in H^0(Y,L)$ with no common zero.
Such a choice is possible because $L$ is globally generated and $Y$
has dimension $2$. They define a morphism
$g:Y\to\PP^2$ with $g^*\OO_{\PP^2}(1)\simeq L$.
A curve contracted by $g$ would have degree zero with respect to $L$,
contrary to ampleness. Thus $g$ is finite and surjective. The graded
ring $B$ is consequently finite over
$P=\C[s_0,s_1,s_2]$; this follows as well by applying the standard
finite-generation theorem for twisted sections to $g_*\OO_Y$.
In particular, $\sqrt{(s_0,s_1,s_2)B}=\bb$.

The total space $X_0$ is the base change under $g$ of
$\Tot(\OO_{\PP^2}(-1))=\Bl_0\A^3$. It is therefore finite over that
blow-up and proper over $\Spec P$. The natural evaluation map to
$\Spec B$ is proper: its graph is closed over $\Spec P$, since
$\Spec B$ is separated over $\Spec P$, and the projection of the
graph is a base change of the proper morphism $X_0\to\Spec P$.

For a nonzero section $s\in B_1$, the usual description of a full
section ring gives
\begin{equation}\label{eq:local-cone}
 (B_s)_0=\Gamma(Y_s,\OO_Y),\qquad
 B_s=(B_s)_0[s,s^{-1}].
\end{equation}
For completeness, a function regular on $Y_s$ becomes a global section
of some power of $L$ after multiplication by a sufficiently large
power of $s$; this proves the first equality. The second follows by
using the invertible homogeneous element $s$ of degree one. These
identifications identify $D(s)$ with the complement of the zero
section over $Y_s$. Since the chosen three $D(s_i)$ cover the vertex
complement, they glue to the required isomorphism. This uses the full
section ring and does not assume generation in degree one; compare
\cite[Lemma 1.2]{Anghel2026}.

Locally trivialize $L$ and write $z$ for the fibre coordinate. A section
of $L^n$, viewed as a function on $X_0$, is a multiple of $z^n$.
At every point of $Y$, one of the degree-one sections is a unit times
$z$. Hence the extension of $\bb$ is precisely the ideal $(z)$ of the
zero section, proving \eqref{eq:conormal}.
The invertible sheaf $\OO_{X_0}(-Y)$ is the pullback of
$\OO_{\Bl_0\A^3}(1)$ in the blow-up convention, equivalently the
pullback of $\OO_{\PP^2}(1)$ to that total space. It is relatively
ample over $\Spec P$, and hence over $\Spec B$.

The variety $X_0\setminus Y$ is smooth and integral. Its ring of global
functions is
\[
 \bigoplus_{n\in\Z}H^0(Y,L^n)=B,
\]
since negative powers of an ample line bundle have no sections.
The global functions on a normal integral scheme form an integrally
closed domain. Thus $B$ is normal, and its dimension is $3$ because
it is finite over $P$. The local ring $B_\bb$ is excellent, since it is
essentially of finite type over $\C$; see
\cite[Tags 07QU and 07QW]{Stacks}. In particular it is a G-ring:
its formal fibres are geometrically regular. Thus the flat completion
map $B_\bb\to R$ is regular, as in \cite[Tag 0AH2]{Stacks}.
Normality passes to this completion by \cite[Tag 0C23]{Stacks},
so $R$ is a normal local domain. Write
$X_{0,\bb}=X_0\times_{\Spec B}\Spec B_\bb$.
The morphism $X\to X_{0,\bb}$ is flat with geometrically regular
fibres, by finite-type base change of the regular completion map
\cite[Tag 07C1]{Stacks}. For a point $x\in X$ above $x_0\in X_{0,\bb}$,
the local map $\OO_{X_{0,\bb},x_0}\to\OO_{X,x}$ is flat, its source
is regular, and its closed fibre is regular. Hence $\OO_{X,x}$ is
regular by \cite[Tag 031E]{Stacks}. In particular, over a point
$y\in Y$ that closed fibre is precisely $\kappa(y)$, since completion
does not change the residue field at the vertex. No assertion that
all formal fibres consist of a single point is needed.
Flatness preserves the Cartier divisor in \eqref{eq:conormal}; its fibre is
still $Y$, and its complement is the punctured spectrum in
\eqref{eq:punctured}. Properness and relative ampleness are preserved
by base change. The model has dimension $3$: along $Y$, a local
equation of this divisor is a nonzerodivisor with regular surface
quotient, and outside $Y$ the morphism is an isomorphism.
\end{proof}

\begin{remark}\label{rem:completion}
No descent or algebraization of an $R$-module to a graded $B$-module is
involved in Proposition~\ref{prop:cone}. The model $X$ is an ordinary
scheme obtained by base change, not a formal scheme whose algebraization
is being assumed. This allows a module defined only over the completed
local ring to be pulled back directly to $X$.
\end{remark}

We record the reflexive properties that will be used repeatedly.
The standard equivalence between reflexivity and the combination of
torsion-freeness with Serre's condition $S_2$ on a normal scheme is
given by \cite[Tag 0AY6]{Stacks}; the corresponding module-theoretic
criterion is \cite[Tag 0AVB]{Stacks}.

\begin{lemma}\label{lem:restriction}
Let $X$ be a regular three-dimensional Noetherian scheme and let
$i:Y\hookrightarrow X$ be a smooth integral Cartier surface. If
$\FF$ is a coherent reflexive sheaf on $X$, then $i^*\FF$ is
torsion-free on $Y$.
\end{lemma}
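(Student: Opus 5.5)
The claim is local on $X$, so I would fix a point $y\in Y$ and work with the regular local ring $A=\OO_{X,y}$ of dimension $3$. Let $t\in A$ be a local equation of $Y$; then $A/tA=\OO_{Y,y}$ is a regular local ring of dimension $2$, and $F=\FF_y$ is a finite reflexive $A$-module. The aim is to show that $F/tF$ is a torsion-free $\OO_{Y,y}$-module. Over a normal domain, torsion-freeness is equivalent to every associated prime being the zero ideal. So it suffices to show that no height-one or height-two prime of $A/tA$ is associated to $F/tF$.

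The input I would use is Serre's condition $S_2$ for reflexive sheaves on a normal scheme \cite[Tag 0AY6]{Stacks}. For every prime $\mathfrak p$ of $A$ it gives
\[
\depth F_{\mathfrak p}\geq\min(2,\dim A_{\mathfrak p}).
\]
Since $F$ is torsion-free and $A$ is a domain, $t$ is a nonzerodivisor on $F$. Localization commutes with the quotient, and $t$ stays a nonzerodivisor on $F_{\mathfrak p}$, so for any prime $\mathfrak p\ni t$ we get
\[
\depth (F/tF)_{\mathfrak p}=\depth F_{\mathfrak p}-1\geq \min(2,\dim A_{\mathfrak p})-1 .
\]

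Now let $\bar{\mathfrak p}=\mathfrak p/tA$ be a prime of $A/tA$ of height $h\geq1$, so that $\dim A_{\mathfrak p}=h+1\geq2$. The displayed inequality gives $\depth (F/tF)_{\mathfrak p}\geq1$. Hence $\bar{\mathfrak p}\notin\Ass(F/tF)$. The only remaining possible associated prime is the generic point of $Y$, and therefore $F/tF$ is torsion-free. Globally, $i^*\FF=\FF/\FF(-Y)$, so $i^*\FF$ is torsion-free on $Y$.

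The only point needing care is the depth bound at the height-one primes of $Y$, that is, at height-two primes of $X$. Regularity of $X$ is not really needed beyond normality of $X$ and of $Y$. Smoothness of $Y$ is used only to say that a module without embedded associated primes is torsion-free.

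One might worry whether $\depth(F/tF)_{\mathfrak p}\geq1$ also requires $(F/tF)_{\mathfrak p}\neq0$. This is harmless: a zero module has no associated primes.

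This argument does not show local freeness of $i^*\FF$, and indeed local freeness fails in general. The reason is that at closed points we would need $\depth F\geq3$, which $S_2$ does not give. This matches the remark in the outline that $V$ need not be locally free.
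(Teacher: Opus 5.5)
Your proof is correct and is essentially the paper's argument. Both use that a local equation $t$ of $Y$ is a nonzerodivisor on the reflexive, hence $S_2$, module $F$, so that $\depth(F/tF)_{\mathfrak p}\geq1$ at every non-generic point of $Y$; this rules out any associated point other than the generic one. The only difference is minor: the paper treats the codimension-one points of $Y$ via local freeness of reflexive sheaves in codimension at most $2$ on the regular scheme $X$, whereas you apply the $S_2$ depth bound uniformly at all primes, which indeed needs only normality of $X$ (and integrality of $Y$ already suffices for your final associated-prime criterion).
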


\begin{proof}
A reflexive sheaf on a regular scheme is locally free at points of
codimension at most $2$. Therefore $i^*\FF$ is locally free at the
generic point and at every codimension-one point of $Y$.
At a closed point of $Y$, choose a local equation $z$ for $Y$.
Since $\FF$ is torsion-free, multiplication by $z$ is injective.
The depth of $\FF$ is at least $2$, and hence
\[
 \depth(\FF/z\FF)\geq1.
\]
Thus $i^*\FF$ has no submodule of finite length at that point.
Any torsion subsheaf would have a component of dimension $1$ or $0$;
the preceding local-freeness and depth statements exclude both.
\end{proof}

\begin{lemma}\label{lem:extension}
If $M$ is a finite reflexive $R$-module, then it is locally free on
$\Spec R\setminus\{\mm\}$. The coherent sheaf
\begin{equation}\label{eq:extension}
 \FF=(\pi^*\widetilde M)^{\vee\vee}
\end{equation}
is reflexive on $X$, restricts to $\widetilde M$ on $U$, and has a
torsion-free restriction to $Y$.
\end{lemma}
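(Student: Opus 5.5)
The proof has three parts. First I show that $M$ is locally free on the punctured spectrum. Then I check that the double dual $\FF$ is reflexive and agrees with $\widetilde M$ on $U$. Finally I invoke Lemma~\ref{lem:restriction} for the restriction to $Y$.

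\emph{Local freeness on the punctured spectrum.} By Proposition~\ref{prop:cone}, $U=X\setminus Y$ is isomorphic to $\Spec R\setminus\{\mm\}$, and $X$ is regular. Hence every local ring $R_\mathfrak p$ with $\mathfrak p\neq\mm$ is regular, of dimension at most $2$. The module $M$ is reflexive over the normal domain $R$, so by the criterion \cite[Tag 0AVB]{Stacks} it is torsion-free and satisfies $S_2$. Thus $M_\mathfrak p$ is a reflexive module over a regular local ring of dimension at most $2$. If $\dim R_\mathfrak p\le 1$, the ring is a field or a DVR, and torsion-free implies free. If $\dim R_\mathfrak p=2$, then $S_2$ gives $\depth M_\mathfrak p=2$, and Auslander--Buchsbaum gives projective dimension $0$. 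So $M_\mathfrak p$ is free in every case.

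\emph{Reflexivity of $\FF$ and its restriction to $U$.} The double dual of a coherent sheaf on a normal Noetherian integral scheme is reflexive (\cite[Tag 0AY6]{Stacks} and the surrounding discussion). Taking duals commutes with restriction to the open set $U$. Over $U$ the morphism $\pi$ is an isomorphism onto the punctured spectrum, so $\pi^*\widetilde M|_U\simeq\widetilde M|_U$. This restriction is locally free by the first step, hence reflexive, and therefore its double dual is itself. It follows that $\FF|_U\simeq\widetilde M|_U$.

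\emph{Torsion-freeness on $Y$.} Proposition~\ref{prop:cone} shows that $Y$ is a smooth integral Cartier surface in the regular three-dimensional scheme $X$. Lemma~\ref{lem:restriction} then applies directly, so $\FF|_Y$ is torsion-free.

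The only step with real content is the first one. One has to make sure that regularity of the punctured spectrum really comes from regularity of $X$ through the isomorphism in Proposition~\ref{prop:cone}. That proposition supplies this, so I expect no genuine obstacle.
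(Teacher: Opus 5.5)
Your proof is correct and follows the same route as the paper. You get regularity of the punctured spectrum from Proposition~\ref{prop:cone}, deduce freeness of reflexive modules over regular local rings of dimension at most $2$, note that double dualization leaves the locally free restriction to $U$ unchanged, and apply Lemma~\ref{lem:restriction} for the restriction to $Y$. The only difference is that you justify the freeness step explicitly (torsion-free plus $S_2$, then Auslander--Buchsbaum in dimension $2$), which the paper simply asserts.
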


\begin{proof}
The local rings of the punctured spectrum are regular and have
dimension at most $2$, by Proposition~\ref{prop:cone}. Reflexive
modules over such rings are free. Double dualization on the regular
scheme $X$ gives a coherent reflexive sheaf and does not change the
already locally free restriction over $U$. Apply
Lemma~\ref{lem:restriction} for the last assertion.
\end{proof}

\begin{lemma}[Finite length of the deficiency module]\label{lem:finite-deficiency}
For the ring $R$ of Proposition~\ref{prop:cone} and every finite
reflexive $R$-module $M$, the module $H^2_\mm(M)$ has finite length.
In particular, $M$ is generalized Cohen--Macaulay: all its local
cohomology modules in degrees strictly below $3$ have finite length.
\end{lemma}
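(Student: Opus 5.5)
The plan is to use the standard principle that a finite module which is locally free on the punctured spectrum of a local ring has finite-length local cohomology below the dimension. By Lemma~\ref{lem:extension}, a finite reflexive $M$ is locally free on $\Spec R\setminus\{\mm\}$. That open set has only finitely many points of each type relevant below, and every nonmaximal prime $\mathfrak p$ satisfies $\dim R_{\mathfrak p}\leq 2$.

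I will argue in one of two equivalent ways. The first route uses the local duality characterization. Since $R$ is complete, it is a quotient of a regular local ring $S$ of some dimension $n$. Grothendieck local duality then gives
\[
 H^i_\mm(M)^\vee\simeq \Ext^{n-i}_S(M,S)\otimes_S \widehat{S}.
\]
So $H^i_\mm(M)$ has finite length exactly when $\Ext^{n-i}_S(M,S)$ does, that is, exactly when its localization at every nonmaximal prime vanishes.

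Now fix a prime $\mathfrak p\neq\mm$ of $R$, with preimage $\mathfrak q\subset S$. Localizing commutes with $\Ext$, giving $\Ext^{n-i}_S(M,S)_{\mathfrak q}=\Ext^{n-i}_{S_\mathfrak q}(M_\mathfrak p,S_\mathfrak q)$. By local duality over $S_\mathfrak q$, this is Matlis dual to $H^{i-\dim(S/\mathfrak q)}_{\mathfrak pR_\mathfrak p}(M_\mathfrak p)$, after the usual shift by $\dim S/\mathfrak q=\dim R/\mathfrak p$, which holds because $R$ is a complete domain and hence catenary and equidimensional.

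Since $M_\mathfrak p$ is free over the regular (hence Cohen--Macaulay) ring $R_\mathfrak p$, its local cohomology is concentrated in degree $\dim R_\mathfrak p=3-\dim R/\mathfrak p$. Hence the localized $\Ext$ vanishes unless $i=3$. This gives finite length of $H^i_\mm(M)$ for all $i<3$, in particular for $i=2$. This is precisely Grothendieck's finiteness criterion for generalized Cohen--Macaulay modules. I may simply cite that criterion, \cite[Tag 0BJQ]{Stacks} or Brodmann--Sharp, in the form: $H^i_\mm(M)$ is finite for $i<t$ if and only if $\depth M_\mathfrak p+\dim R/\mathfrak p\geq t$ for all $\mathfrak p\neq\mm$. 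Here $\depth M_\mathfrak p+\dim R/\mathfrak p=\dim R_\mathfrak p+\dim R/\mathfrak p=3$ by catenarity and equidimensionality of the complete local domain $R$.

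The second route, useful later, is geometric. Since $U\simeq\Spec R\setminus\{\mm\}$, the sequence $H^1(U,\widetilde M)\simeq H^2_\mm(M)$ holds, using depth $M\geq2$ from reflexivity. The Mayer--Vietoris/excision sequence for $Y\subset X$ gives
\[
 H^1(X,\FF)\to H^1(U,\FF|_U)\to H^2_Y(X,\FF).
\]
One would then need $H^1(X,\FF)$ to be finite (theorem on formal functions, with $\pi$ proper and isomorphic away from $\mm$) and $H^2_Y(X,\FF)$ to be finite length. The second condition is less immediate, so I would use the duality route for this lemma.

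The only step requiring care is checking the hypotheses of the criterion. These are that $R$ is a quotient of a regular (or Gorenstein) local ring, which holds by Cohen's structure theorem, and the dimension formula $\dim R_\mathfrak p+\dim R/\mathfrak p=3$, which follows from $R$ being a complete local domain. Freeness on the punctured spectrum is already supplied by Lemma~\ref{lem:extension}.
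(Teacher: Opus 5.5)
Your duality argument is correct and is essentially the paper's proof: both write $R$ as a quotient of a complete regular local ring $S$ and use local duality to reduce finite length of $H^i_\mm(M)$ to $\Ext^{n-i}_S(M,S)$ being supported at the closed point. Both then check vanishing at each nonmaximal prime from freeness of $M_\mathfrak p$ and the dimension formula $\dim R_\mathfrak p+\dim R/\mathfrak p=3$. The paper gets that vanishing from Auslander--Buchsbaum ($\operatorname{pd}_{S_\mathfrak q}M_\mathfrak p=N-3<N-2$) and handles $i=0,1$ via depth $\geq2$, whereas you use local duality over $S_\mathfrak q$ (equivalently Grothendieck's finiteness criterion) to treat all $i<3$ at once. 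Your aside that the punctured spectrum has only finitely many points of each relevant type is false, but the argument never uses it.
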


\begin{proof}
The assertion is immediate for $M=0$, so assume $M\ne0$.
Choose a presentation $R=S/I$ with
$S=\C[[z_1,\ldots,z_N]]$ a complete regular local ring. Since $R$
is a three-dimensional domain, $I$ is prime of height $N-3$.
Put $D_M=\Ext_S^{N-2}(M,S)$, a finite $R$-module.
For $\mathfrak p\ne\mm$ in $\Spec R$, let $\mathfrak q$ be its inverse
image in $S$. By Lemma~\ref{lem:extension}, $M_\mathfrak p$ is free
of positive rank over the regular local ring $R_\mathfrak p$.
Catenarity and the dimension formula for the regular local ring $S$
give
\[
 \dim S_\mathfrak q-\dim R_\mathfrak p=N-3.
\]
Regularity of $S_\mathfrak q$ gives
$\depth S_\mathfrak q=\dim S_\mathfrak q$ and finite projective dimension
for every finite $S_\mathfrak q$-module. Since $M_\mathfrak p$ is a
nonzero free module over the regular local ring $R_\mathfrak p$, its
depth over $R_\mathfrak p$ equals $\dim R_\mathfrak p$. Moreover, the
surjective local homomorphism $S_\mathfrak q\to R_\mathfrak p$ gives
$\depth_{S_\mathfrak q}M_\mathfrak p=
\depth_{R_\mathfrak p}M_\mathfrak p$.
The Auslander--Buchsbaum formula \cite[Tag 090V]{Stacks} therefore gives
\[
 \operatorname{pd}_{S_\mathfrak q}M_\mathfrak p
 =\depth S_\mathfrak q-\depth_{S_\mathfrak q}M_\mathfrak p
 =\dim S_\mathfrak q-\dim R_\mathfrak p=N-3.
\]
Localization of Ext now yields $(D_M)_\mathfrak p=0$, since $N-2>N-3$.
Thus $D_M$ is supported only at $\mm$ and has finite length.

Let $\mathfrak n_S$ be the maximal ideal of $S$ and let $E_S(\C)$ be
an injective hull of its residue field. A normalized dualizing complex
for $S$ is $\omega_S^\bullet\simeq S[N]$. Applying local duality
\cite[Tag 0AAK]{Stacks} in degree $2$, and using that the finite module
$D_M$ is already complete, gives
\[
 D_M\simeq\Hom_S(H^2_{\mathfrak n_S}(M),E_S(\C))
       =\Hom_S(H^2_\mm(M),E_S(\C)).
\]
The equality uses the fact that the image of $\mathfrak n_S$ under
$S\twoheadrightarrow R$ generates $\mm$. Local cohomology at the maximal
ideal of a finite module is Artinian, so Matlis biduality over the
complete local ring $S$ applies. Since Matlis duality preserves finite
length, $H^2_\mm(M)$ has finite length as well. Finally,
$H^0_\mm(M)=H^1_\mm(M)=0$ because $M$ is reflexive over the normal
local ring $R$ and has depth at least $2$.
\end{proof}

\section{Elementary transformations and slope normalization}\label{sec:normalization}

For a nonzero torsion-free coherent sheaf $T$ on $Y$, set
\[
 d=H^2,\qquad
 s(T)=\frac{c_1(T)\cdot H}{\rk(T)d}.
\]
We call $s(T)$ its normalized slope. Let $\smax(T)$ and $\smin(T)$
be the largest and smallest slopes in its Harder--Narasimhan filtration.
All Harder--Narasimhan filtrations here are taken with respect to $H$
and have torsion-free semistable successive quotients. We use the usual
slope properties, as in \cite[Sections 1.3 and 1.6]{HuybrechtsLehn2010}.
In particular,
\begin{equation}\label{eq:slope-properties}
 s(T(1))=s(T)+1,\qquad
 \smax(T')\leq\max\{\smax(T_1),\smax(T_2)\}
\end{equation}
whenever $0\to T_1\to T'\to T_2\to0$ is an exact sequence of
torsion-free sheaves. The second inequality follows by intersecting an
arbitrary subsheaf of $T'$ with $T_1$ and considering its image in $T_2$.

\begin{lemma}[Elementary transformation]\label{lem:hecke}
Let $\FF$ be reflexive on $X$, put $V=i^*\FF$, and let
$\varphi:V\twoheadrightarrow Q$ be a torsion-free quotient on $Y$ with
kernel $K$. Let $\eta:\FF\to i_*i^*\FF=i_*V$ be the canonical
restriction map, and define
\begin{equation}\label{eq:hecke-definition}
 \FF'=\ker\bigl(\FF\xrightarrow{\eta}i_*V
                      \xrightarrow{i_*\varphi}i_*Q\bigr).
\end{equation}
Then $\FF'$ is reflexive, $\FF'|_U\simeq\FF|_U$, and its restriction
fits into the exact sequence
\begin{equation}\label{eq:hecke-restriction}
 0\longrightarrow Q(1)\longrightarrow i^*\FF'
   \longrightarrow K\longrightarrow0.
\end{equation}
In particular,
\begin{equation}\label{eq:degree-increase}
 c_1(i^*\FF')\cdot H=c_1(V)\cdot H+\rk(Q)d.
\end{equation}
\end{lemma}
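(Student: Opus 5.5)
We prove the elementary-transformation lemma by reading it locally along $Y$ and using the standard depth criterion for reflexivity. Write $\psi=i_*\varphi\circ\eta:\FF\to i_*Q$. This map is surjective, since $\eta$ is surjective and $\varphi$ is surjective. So $\FF'$ sits in the exact sequence
\[
 0\longrightarrow\FF'\longrightarrow\FF\longrightarrow i_*Q\longrightarrow0.
\]
Since $i_*Q$ is supported on $Y$, we get $\FF'|_U=\FF|_U$ at once.

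\emph{Reflexivity.} We use the criterion recalled before Lemma~\ref{lem:restriction}: torsion-free plus $S_2$ on the regular scheme $X$. Torsion-freeness is inherited from $\FF$. For $S_2$, the only points needing care are points $x\in Y$ of codimension $\geq2$ in $X$. Since $Q$ is torsion-free on the smooth surface $Y$, $\depth Q_x\geq1$ at closed points of $Y$. Hence $\depth_{\OO_{X,x}}(i_*Q)_x\geq1$. The depth lemma applied to the sequence above gives
\[
 \depth\FF'_x\geq\min\{\depth\FF_x,\depth(i_*Q)_x+1\}\geq2 .
\]
At codimension-two points of $X$ lying on $Y$ the same bound applies. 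Indeed, $Q$ is locally free there (torsion-free on a curve-point of the smooth surface $Y$ means depth $1$), and $\FF$ is free there. So $\FF'$ satisfies $S_2$ and is reflexive.

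\emph{Restriction sequence.} Let $z$ be a local equation of $Y$. Then $z\FF\subset\FF'\subset\FF$, with $\FF/\FF'\simeq i_*Q$ and $\FF'/z\FF\simeq i_*K$. Consider the chain $z\FF'\subset z\FF\subset\FF'$. Its quotients are:
\begin{itemize}[leftmargin=*]
 \item $\FF'/z\FF\simeq i_*K$;
 \item $z\FF/z\FF'\simeq(\FF/\FF')\otimes\OO_X(-Y)\simeq i_*(Q\otimes L)=i_*Q(1)$, using multiplication by $z$, which is injective on the torsion-free $\FF$, together with \eqref{eq:punctured}.
\end{itemize}
Globally, $\OO_X(-Y)\otimes\FF\to\FF'$ is the natural inclusion, so no choice of $z$ is involved. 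This gives
\[
 0\to i_*Q(1)\to\FF'/z\FF'=i_*i^*\FF'\to i_*K\to0,
\]
which is \eqref{eq:hecke-restriction}.

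\emph{Degree formula.} Additivity of $c_1$ gives
\[
 c_1(i^*\FF')=c_1(Q)+\rk(Q)H+c_1(K)=c_1(V)+\rk(Q)H .
\]
Intersecting with $H$ yields \eqref{eq:degree-increase}.

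The main obstacle is the depth computation at closed points of $Y$. It needs exactly that $Q$ be torsion-free: a $0$-dimensional part of $Q$ would destroy $S_2$. The other subtle point is checking that the first map in the restriction sequence is really injective. That follows from the chain-of-submodules description rather than from a right-exact tensor argument.
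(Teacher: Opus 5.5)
Your proof is correct and follows the paper's argument: the same short exact sequence $0\to\FF'\to\FF\to i_*Q\to0$, the same depth-lemma check of $S_2$ at closed and codimension-two points of $Y$, and additivity of $c_1$. The only variation is that you derive \eqref{eq:hecke-restriction} from the filtration $z\FF'\subset z\FF\subset\FF'$, whereas the paper tensors the short exact sequence with $\OO_Y$ and uses $\Tor_1^{\OO_X}(\FF,\OO_Y)=0$ and $\Tor_1^{\OO_X}(i_*Q,\OO_Y)\simeq Q(1)$; both versions rest on the same fact, that $z$ is a nonzerodivisor on $\FF$.
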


\begin{proof}
The map $\eta$ is surjective: locally, for a defining equation $z$ of
$Y$, it is the quotient map $\FF\to\FF/z\FF$. This surjectivity does
not require Tor-vanishing. Since $i$ is a closed immersion, its direct
image on sheaves of modules is exact; hence $i_*\varphi$ is also
surjective. Their composite therefore gives a short exact sequence
\[
 0\longrightarrow\FF'\longrightarrow\FF
   \xrightarrow{(i_*\varphi)\circ\eta}i_*Q\longrightarrow0.
\]
The kernel is coherent and torsion-free, and the quotient is supported
on $Y$, so the restrictions to $U$ are the same. We verify $S_2$ for
$\FF'$.
At a closed point of $Y$, the torsion-free surface sheaf $Q$ has depth
at least $1$, while $\FF$ has depth at least $2$. The depth lemma
therefore gives depth at least $2$ for $\FF'$. At a codimension-one
point of $Y$, both $V$ and $Q$ are finite torsion-free modules over the
corresponding discrete valuation ring, and hence are free. The same depth lemma over the regular two-dimensional
local ring of $X$ again gives depth at least $2$. At the generic point
of $Y$, torsion-freeness gives depth at least $1$. Away from $Y$ there
is nothing to check. Thus $\FF'$ is $S_2$ and torsion-free, hence
reflexive.

Since a local equation of $Y$ is a nonzerodivisor on $\FF$,
$\Tor_1^{\OO_X}(\FF,\OO_Y)=0$. On the other hand, the two-term
locally free resolution of $\OO_Y$ gives
\[
 \Tor_1^{\OO_X}(i_*Q,\OO_Y)
       \simeq Q\otimes\OO_X(-Y)|_Y\simeq Q(1).
\]
Tensoring $0\to\FF'\to\FF\to i_*Q\to0$ with $\OO_Y$ now yields
\eqref{eq:hecke-restriction}. Additivity of first Chern classes gives
\eqref{eq:degree-increase}.
\end{proof}

This is the kernel form of the reflexive Hecke transformation;
compare \cite[Lemmas 2.2 and 2.4 and Proposition 2.6]{ChenSun2020}.
The proof above is given in the algebraic setting needed here and allows
$Q$ to fail to be locally free at finitely many points.

\begin{remark}\label{rem:sign}
The twist in \eqref{eq:hecke-restriction} is the conormal bundle, not
the normal bundle. The case $\FF=\OO_X$ and $Q=\OO_Y$ gives
$\FF'=\OO_X(-Y)$ and $i^*\FF'=L$, which also fixes the sign directly.
\end{remark}

\begin{proposition}[Fixed-interval normalization]\label{prop:normalization}
Let $\FF$ be a nonzero coherent reflexive sheaf on $X$ and let
$V=i^*\FF$. By a finite sequence of twists by multiples of $Y$ and
elementary transformations of the form \eqref{eq:hecke-definition},
one can replace $\FF$ by a reflexive sheaf $\GG$ such that
$\GG|_U\simeq\FF|_U$ and
\begin{equation}\label{eq:normalized-interval}
 \frac32\leq\smin(i^*\GG)\leq\smax(i^*\GG)<\frac52.
\end{equation}
\end{proposition}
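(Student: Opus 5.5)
The plan is to first twist so that the top slope lands in $[3/2,5/2)$, and then repeatedly apply Lemma~\ref{lem:hecke} with $Q$ equal to the part of the Harder--Narasimhan filtration of slope $<3/2$. Each such step keeps the top slope below $5/2$ and raises the lower bound on the slopes by $1$, capped at $3/2$.

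\emph{Step 1: twisting.} Twisting $\FF$ by $\OO_X(-mY)$ leaves it reflexive and does not change it on $U$. By \eqref{eq:punctured} its restriction is $V\otimes L^m=V(m)$. By \eqref{eq:slope-properties} all Harder--Narasimhan slopes shift by exactly $m$. Choose the unique $m\in\Z$ with
\[
 \tfrac32\leq\smax(V)+m<\tfrac52,
\]
and replace $\FF$ by $\FF(-mY)$. From now on the invariant $\smax(i^*\FF)<5/2$ is to be maintained.

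\emph{Step 2: the dual slope inequality.} I will also use the companion of \eqref{eq:slope-properties}:
\[
 \smin(T')\geq\min\{\smin(T_1),\smin(T_2)\}
\]
for an exact sequence $0\to T_1\to T'\to T_2\to0$ of torsion-free sheaves. This is proved dually by taking an arbitrary torsion-free quotient $T'\twoheadrightarrow G$. The image of $T_1$ in $G$ is either zero, in which case $G$ is a quotient of $T_2$, or a nonzero quotient of $T_1$. In the second case, the cokernel of the image modulo torsion is a quotient of $T_2$. Since saturating a subsheaf only increases $c_1\cdot H$, a weighted average of slopes bounds $s(G)$ from below. Alternatively, I would deduce it by dualizing and using $\smin(T)=-\smax(T^\vee)$ for torsion-free $T$, which is valid up to codimension-two phenomena that do not affect slopes.

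\emph{Step 3: the elementary transformation.} Let $V=i^*\FF$, which is torsion-free by Lemma~\ref{lem:restriction}. Let $0=V_0\subset V_1\subset\cdots\subset V_r=V$ be its Harder--Narasimhan filtration, with decreasing slopes $\mu_1>\cdots>\mu_r$. Let $k$ be the largest index with $\mu_k\geq 3/2$, with $k=0$ if there is none. Put $K=V_k$ and $Q=V/V_k$.

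If $\smin(V)\geq3/2$ we are already done. Otherwise $Q\neq0$, and $Q$ is torsion-free because it is a successive extension of the torsion-free Harder--Narasimhan quotients. Its Harder--Narasimhan slopes are $\mu_{k+1},\dots,\mu_r$, all in $[\smin(V),3/2)$. The slopes of $K$ are $\mu_1,\dots,\mu_k$, all in $[3/2,5/2)$.

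Lemma~\ref{lem:hecke} produces a reflexive $\FF'$ with $\FF'|_U\simeq\FF|_U$ and an exact sequence $0\to Q(1)\to i^*\FF'\to K\to0$. The slopes of $Q(1)$ lie in $[\smin(V)+1,5/2)$; the strict inequality is exactly where the half-open window is used. By \eqref{eq:slope-properties} and Step~2,
\[
 \smax(i^*\FF')<\tfrac52,\qquad
 \smin(i^*\FF')\geq\min\{\tfrac32,\ \smin(V)+1\}.
\]

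\emph{Step 4: termination.} Iterating Step~3, the lower bound on $\smin$ increases by $1$ at each step until it reaches $3/2$. Hence after at most $\lceil 3/2-\smin(V)\rceil$ steps we obtain $\GG$ satisfying \eqref{eq:normalized-interval}. Every operation is a twist or an elementary transformation, so $\GG|_U\simeq\FF|_U$ throughout.

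The main obstacle I expect is Step~2, the $\smin$ inequality for extensions of torsion-free sheaves that need not be locally free. The paper only records the $\smax$ version. If the direct quotient argument becomes awkward because of saturation issues at finitely many points, I would fall back on the dualization route, using that slopes depend only on $c_1\cdot H$ and hence ignore finite-length discrepancies.
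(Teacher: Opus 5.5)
Your proof is correct. It has the same skeleton as the paper's: twist until $\smax<5/2$, then iterate Lemma~\ref{lem:hecke} along Harder--Narasimhan quotients, with \eqref{eq:slope-properties} preserving $\smax<5/2$. The termination mechanism, however, is genuinely different.

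The paper removes only the last Harder--Narasimhan factor at each step, so $\smin$ need not increase, since an earlier factor of slope $<3/2$ may survive. It terminates instead through the integer potential \eqref{eq:normalization-potential}: $c_1(V_n)\cdot H$ grows by $q_nd\geq d$ at every step, but stays below $\tfrac52 rd$ because $\smax<5/2$.

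You remove the whole part of slope $<3/2$ at once. This makes the lower bound on $\smin$ rise by $1$ per step until it reaches $3/2$. You get the explicit step count $\lceil 3/2-\smin(V_0)\rceil$, which depends only on the initial minimal slope and not on rank or degree.

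The price is the extra inequality $\smin(T')\geq\min\{\smin(T_1),\smin(T_2)\}$ of your Step~2. Your quotient-and-saturation argument does establish it, and so would the duality $\smax(T^\vee)=-\smin(T)$ that the paper itself uses in Proposition~\ref{prop:tails}. Note also that the paper's degree count applies verbatim to your choice of $Q$, since it uses only $\rk Q\geq1$ and the invariant $\smax<5/2$. So Step~2 could be dropped if you only want termination rather than your sharper bound on the number of steps.
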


\begin{proof}
Write $r=\rk\FF$. Since
$i^*(\FF(aY))\simeq V(-a)$, a twist by a sufficiently large integer
$a$ makes $\smax(V)<5/2$. This twist is trivial on $U$.
Suppose the resulting $V$ still has $\smin(V)<3/2$. Take the last
Harder--Narasimhan quotient $Q$, so $Q$ is semistable of slope
$\smin(V)$, and write $K$ for the preceding step of the filtration.
If $V$ is semistable, this notation allows $Q=V$ and $K=0$.
In that case the transformation is simply
$\FF'=\ker(\FF\to i_*V)=\FF(-Y)$, because the local equation of $Y$
is a nonzerodivisor on $\FF$, and its restriction is $V(1)$.

Perform the transformation of Lemma~\ref{lem:hecke}. Its restriction
$V'$ is an extension of $K$ by $Q(1)$. The latter is semistable of
slope $\smin(V)+1<5/2$, while every Harder--Narasimhan slope of $K$
is also less than $5/2$. Equation~\eqref{eq:slope-properties} implies
$\smax(V')<5/2$.

For an explicit termination bound, denote the restriction after the
initial twist by $V_0$, and after $n$ further transformations by $V_n$.
Put $e_n=c_1(V_n)\cdot H$, and let $q_n\geq1$ be the rank of the last
Harder--Narasimhan quotient used at step $n$. The rank remains $r$, and
\[
 e_{n+1}=e_n+q_nd,\qquad e_n<\frac52rd.
\]
Consequently the integers
\begin{equation}\label{eq:normalization-potential}
 D_n=\left\lceil\frac{5r}{2}-\frac{e_n}{d}\right\rceil-1
 \quad\text{satisfy}\quad
 D_n\geq0,\qquad D_{n+1}=D_n-q_n\leq D_n-1.
\end{equation}
Thus at most $D_0$ further transformations are possible. At termination
the minimum slope is at least $3/2$, since otherwise the construction
would supply another step. The maximum slope remains less than $5/2$,
giving \eqref{eq:normalized-interval}. Lemma~\ref{lem:hecke} ensures
reflexivity and invariance on $U$ at every step. This bound depends on
the initial sheaf and the chosen first twist; no uniform bound over all
modules is asserted.
\end{proof}

\begin{remark}\label{rem:no-splitting}
The terminating quantity in \eqref{eq:normalization-potential} is an
integer function of the total degree, not the minimum slope.
The proof does not require the minimum slope to increase strictly at
each step, does not permute Harder--Narasimhan factors, and does not
split any extension. This is why a filtration by line bundles is not
needed.
\end{remark}

\section{Cohomological estimates on the exceptional surface}\label{sec:surface-estimates}

We now assume $K_Y\equiv4H$. The next two estimates concern arbitrary
torsion-free sheaves on $Y$ with slopes in the interval
\eqref{eq:normalized-interval}.

\begin{proposition}[Euler characteristic]\label{prop:euler}
Let $V$ be a torsion-free sheaf of rank $r>0$ satisfying
\eqref{eq:normalized-interval}. Then
\begin{equation}\label{eq:euler-bound}
 \chi(V)\leq
 \left(\chi(\OO_Y)-\frac{15d}{8}\right)r=-c(Y,H)r.
\end{equation}
\end{proposition}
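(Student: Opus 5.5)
The plan is to reduce to semistable pieces using additivity of $\chi$ along the Harder--Narasimhan filtration, and then bound each semistable piece by Riemann--Roch, Bogomolov's inequality and the Hodge index theorem. If $0=V_0\subset V_1\subset\cdots\subset V_k=V$ is the Harder--Narasimhan filtration with torsion-free semistable quotients $G_j$ of rank $r_j$ and normalized slope $s_j$, then $\chi(V)=\sum_j\chi(G_j)$ and $\sum_j r_j=r$. Hypothesis \eqref{eq:normalized-interval} gives $s_j\in[3/2,5/2)$ for every $j$. It therefore suffices to prove, for a single torsion-free $H$-semistable sheaf $G$ of rank $r$ with $s=s(G)\in[3/2,5/2)$, that $\chi(G)\leq(\chi(\OO_Y)-15d/8)\,r$.

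For such $G$, first pass to the reflexive hull $G^{\vee\vee}$, which is locally free because $Y$ is a smooth surface. The quotient $G^{\vee\vee}/G$ has finite length $\ell\geq0$, so $\chi(G)=\chi(G^{\vee\vee})-\ell\leq\chi(G^{\vee\vee})$. The hull has the same rank and first Chern class, and it is still $H$-semistable, since slope semistability only sees codimension-one data. So we may assume $G$ is a semistable vector bundle. Write $c_1=c_1(G)$, $c_2=c_2(G)$ and $\Delta=2rc_2-(r-1)c_1^2$. Riemann--Roch then reads
\[
 \chi(G)=r\chi(\OO_Y)+\tfrac12c_1\cdot(c_1-K_Y)-c_2
 =r\chi(\OO_Y)+\frac{c_1^2}{2r}-\frac{\Delta}{2r}-\frac12c_1\cdot K_Y.
\]
Bogomolov's inequality for $H$-semistable bundles in characteristic zero gives $\Delta\geq0$. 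The Hodge index theorem gives $c_1^2\leq(c_1\cdot H)^2/d=r^2s^2d$. The numerical hypothesis $K_Y\equiv4H$ gives $c_1\cdot K_Y=4c_1\cdot H=4rsd$.

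Combining these three facts,
\[
 \chi(G)\leq r\chi(\OO_Y)+rd\Bigl(\frac{s^2}{2}-2s\Bigr)
 =r\chi(\OO_Y)+\frac{rd}{2}\bigl((s-2)^2-4\bigr).
\]
On $[3/2,5/2)$ we have $(s-2)^2\leq1/4$, so the right side is at most $r\chi(\OO_Y)-\tfrac{15}{8}rd$, as required. Summing over the Harder--Narasimhan factors then gives \eqref{eq:euler-bound}.

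The only delicate point is applying Bogomolov and Riemann--Roch to sheaves that are merely torsion-free. I handle this by the reflexive-hull reduction above. It uses two facts: the Euler characteristic only drops when passing from $G^{\vee\vee}$ to $G$, and semistability passes to $G^{\vee\vee}$. A second point to check is that the Harder--Narasimhan factors are themselves torsion-free, which holds by the convention fixed before \eqref{eq:slope-properties}.
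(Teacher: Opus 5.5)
Your proof is correct and follows the paper's argument. You reduce to Harder--Narasimhan factors by additivity of $\chi$, then combine Riemann--Roch, Bogomolov's inequality, the Hodge index theorem and $K_Y\equiv4H$ to get $\chi(G)/r\leq\chi(\OO_Y)-2d+\tfrac d2(s-2)^2\leq\chi(\OO_Y)-\tfrac{15d}{8}$. The only difference is in handling non-locally-free factors: the paper applies Bogomolov (in Langer's torsion-free form) and Riemann--Roch via a locally free resolution directly to the torsion-free factor, whereas you pass to the locally free, still semistable reflexive hull and use $\chi(G)=\chi(G^{\vee\vee})-\ell\leq\chi(G^{\vee\vee})$, which is equally valid.
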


\begin{proof}
Take a semistable Harder--Narasimhan quotient $Q$ of $V$ and put
$q=\rk Q$, $\nu=c_1(Q)/q$, and $s=\nu\cdot H/d$.
Bogomolov's inequality in characteristic zero gives
\[
 \Delta(Q):=2q\,c_2(Q)-(q-1)c_1(Q)^2\geq0.
\]
For the surface form, see \cite[Theorem 4.4]{Langer2022}, which applies
to torsion-free sheaves. That statement uses a very ample divisor:
choose $m>0$ such that $mH$ is very ample. Multiplication of the
polarization by $m$ multiplies all slopes by the same positive number,
so $H$-semistability and $mH$-semistability coincide. The conclusion
$\Delta(Q)\geq0$ is independent of this auxiliary choice; the divisor
$H$, the line bundle $L$, and the section ring in our construction
are unchanged. The Hodge index theorem gives $\nu^2\leq s^2d$,
and numerical equivalence gives $\nu\cdot K_Y=4\nu\cdot H$.
Riemann--Roch on the smooth surface yields
\begin{align}
 \frac{\chi(Q)}{q}
 &=\chi(\OO_Y)+\frac{\nu^2-\nu\cdot K_Y}{2}
              -\frac{\Delta(Q)}{2q^2}\notag\\
 &\leq\chi(\OO_Y)+\frac d2(s^2-4s)\notag\\
 &=\chi(\OO_Y)-2d+\frac d2(s-2)^2\notag\\
 &\leq\chi(\OO_Y)-\frac{15d}{8}.
 \label{eq:RR}
\end{align}
The last inequality uses $3/2\leq s<5/2$. Additivity of Euler
characteristic along the Harder--Narasimhan filtration proves the
claim. To justify the Riemann--Roch identity for a possibly non-locally-free
$Q$, take a finite locally free resolution on the smooth projective
surface. Apply Hirzebruch--Riemann--Roch to its locally free terms
\cite[Appendix A, Theorem 4.1]{Hartshorne1977}, and use additivity of
Euler characteristic and of the Chern character. This gives the displayed
identity for $Q$ without invoking the relative Grothendieck--Riemann--Roch
theorem. For the Hodge index theorem, see
\cite[Chapter V, Theorem 1.9]{Hartshorne1977}.
\end{proof}

\begin{lemma}[Two-section Koszul estimate]\label{lem:koszul}
Let $T$ be a torsion-free coherent sheaf on $Y$. If $H^0(Y,T(-3))=0$,
then
\begin{equation}\label{eq:koszul-bound}
 h^0(T)\geq h^0(T(-1))+h^0(T(-2)).
\end{equation}
\end{lemma}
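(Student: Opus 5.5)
The plan is to compare consecutive twists of $T$ using the Koszul complex of two sections $s,t\in H^0(Y,L)$ that form a regular sequence on $T$ at every point. The estimate \eqref{eq:koszul-bound} will then follow by applying the resulting inequality twice, at two consecutive levels.

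\emph{Choice of sections.} Since $T$ is torsion-free on the integral surface $Y$, multiplication by any nonzero $s\in H^0(Y,L)$ is injective on $T(j-1)\to T(j)$. Fix such an $s$ and let $C=V(s)$. The sheaf $T/sT=T|_C$ is coherent on $C$, so it has finitely many associated points: the generic points of the components of $C$ and possibly finitely many embedded closed points, for instance where $T$ fails to be locally free. Because $L$ is globally generated, the sections vanishing at a given point form a proper linear subspace of $H^0(Y,L)$. Avoiding a finite union of proper subspaces over $\C$, we choose $t\in H^0(Y,L)$ that vanishes at none of these associated points. Then $t$ is a nonzerodivisor on $T|_C$, so $(s,t)$ is a $T$-regular sequence locally everywhere.

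\emph{Koszul exactness and the basic inequality.} For each $j$ the Koszul complex
\[
 0\longrightarrow T(j-2)\xrightarrow{(t,-s)}T(j-1)^{\oplus2}\xrightarrow{(s,\,t)}T(j)
\]
is then exact at the left and middle terms. Left exactness of $H^0$ gives an exact sequence $0\to H^0(T(j-2))\to H^0(T(j-1))^{\oplus 2}\to H^0(T(j))$. Counting dimensions yields
\[
 h^0(T(j))\;\geq\;2h^0(T(j-1))-h^0(T(j-2))\qquad\text{for all } j.
\]

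\emph{Conclusion.} We apply this inequality with $j=-1$ and use $h^0(T(-3))=0$, which gives $h^0(T(-1))\geq 2h^0(T(-2))$. We then apply it with $j=0$:
\[
 h^0(T)\geq 2h^0(T(-1))-h^0(T(-2))\geq h^0(T(-1))+h^0(T(-2)),
\]
where the second step uses $h^0(T(-1))-h^0(T(-2))\geq h^0(T(-2))$. This is \eqref{eq:koszul-bound}.

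The only delicate point is the choice of $t$. Middle exactness requires $t$ to be a nonzerodivisor on $T|_C$ including at its embedded points, and these can occur because $T$ is only torsion-free. This is handled by the finite-avoidance argument above, which uses global generation of $L$ together with the infinitude of $\C$.
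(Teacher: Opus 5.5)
Your proof is correct and follows the paper's argument. Both use the two-term Koszul complex of two sections of $L$ tensored with $T$, the inequality $h^0(T(j))\geq 2h^0(T(j-1))-h^0(T(j-2))$ at $j=-1$ and $j=0$, and the same combination. The only difference is in securing the regular sequence. The paper chooses $a,b$ with finite common zero locus avoiding the finite non-locally-free locus of $T$, so $T$ is free wherever both vanish. You instead make $t$ avoid the associated points of $T/sT$, which handles the non-locally-free points directly rather than avoiding them.
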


\begin{proof}
The morphism defined by $|H|$ is finite onto a surface: a contracted
curve would have $H$-degree zero, contradicting ampleness. Thus
$h^0(Y,L)\geq3$. Two general hyperplanes meet its image in a finite
set, and their inverse images have no common curve.
The non-locally-free locus of $T$ is finite. By global generation,
we can choose two such sections $a,b$ of $L$ whose common zero locus
also avoids that finite set. At a common zero, $T$ is free and $a,b$
are a regular sequence. Outside the common zero locus of $a$ and $b$,
at least one of them is a unit in a local trivialization of $L$.
Consequently, the Koszul complex tensored with $T$ is exact in its
first two positions:
\[
 0\longrightarrow T(-2)\xrightarrow{(-b,a)}T(-1)^{\oplus2}
   \xrightarrow{(a,b)}T.
\]
Taking global sections gives, with $a_j=h^0(T(-j))$,
\[
 a_0\geq2a_1-a_2.
\]
Apply the same argument after twisting by $L^{-1}$ to get
$a_1\geq2a_2-a_3=2a_2$. Hence
\[
 a_0-a_1-a_2
   =(a_0-2a_1+a_2)+(a_1-2a_2)\geq0.
\]
This proves \eqref{eq:koszul-bound}.
\end{proof}

\begin{proposition}[The two tails]\label{prop:tails}
Let $V$ satisfy \eqref{eq:normalized-interval}. Define
\begin{equation}\label{eq:tails}
 T_-(V)=\sum_{j<0}h^0(V(j)),\qquad
 T_+(V)=\sum_{j>0}h^2(V(j)).
\end{equation}
These sums are finite, and
\begin{equation}\label{eq:tail-bounds}
 T_-(V)\leq h^0(V),\qquad T_+(V)\leq h^2(V).
\end{equation}
More precisely, only $j=-1,-2$ can contribute to the first sum and
only $j=1,2$ can contribute to the second.
\end{proposition}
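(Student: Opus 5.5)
The plan is to prove the two vanishing ranges first and then feed them into Lemma~\ref{lem:koszul}, once for $V$ and once for a dual sheaf. The only inputs are \eqref{eq:normalized-interval}, the slope shift \eqref{eq:slope-properties}, Serre duality, and the numerical identity $s(\omega_Y)=K_Y\cdot H/d=4$.

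\textbf{Vanishing of $h^0$ for $j\leq-3$.} Suppose $H^0(V(j))\neq0$. A nonzero section gives a map $\OO_Y\to V(j)$. Its image is a nonzero quotient of $\OO_Y$ inside a torsion-free sheaf, hence torsion-free of rank one, so the map is injective. Thus $\OO_Y\subset V(j)$ is a subsheaf of normalized slope $0$, and therefore
\[
0\leq\smax(V(j))=\smax(V)+j<\tfrac52+j .
\]
This forces $j\geq-2$. Hence $h^0(V(j))=0$ for all $j\leq-3$, and only $j=-1,-2$ can contribute to $T_-(V)$.

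\textbf{Vanishing of $h^2$ for $j\geq3$.} Serre duality on the smooth projective surface gives $h^2(V(j))=\dim\Hom(V(j),\omega_Y)$; this holds for any coherent sheaf. A nonzero map $V(j)\to\omega_Y$ has as image a rank-one subsheaf $I\subset\omega_Y$. Every torsion-free quotient of $V(j)$ has slope at least $\smin(V(j))$, so $s(I)\geq\smin(V)+j\geq\tfrac32+j$. On the other hand $I$ is a subsheaf of the line bundle $\omega_Y$ of the form $I_Z\otimes\omega_Y(-D)$ with $D\geq0$, and $H$ is ample. Hence $s(I)\leq s(\omega_Y)=4$, using $K_Y\equiv4H$. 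This gives $j\leq\tfrac52$, so $h^2(V(j))=0$ for $j\geq3$, and only $j=1,2$ can contribute to $T_+(V)$. In particular both sums are finite.

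\textbf{The two inequalities.} For $T_-$, apply Lemma~\ref{lem:koszul} to $T=V$. Its hypothesis $h^0(V(-3))=0$ is the first vanishing, and the conclusion is exactly $h^0(V(-1))+h^0(V(-2))\leq h^0(V)$.

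For $T_+$, put $T=V^\vee\otimes\omega_Y$. Here $V^\vee$ is reflexive on a smooth surface, hence locally free, so $T$ is torsion-free. Moreover
\[
H^0(T(-k))=\Hom(V(k),\omega_Y),\qquad h^0(T(-k))=h^2(V(k)).
\]
The hypothesis $h^0(T(-3))=0$ is the second vanishing. Lemma~\ref{lem:koszul} then yields $h^2(V(1))+h^2(V(2))\leq h^2(V)$.

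I expect the only delicate point to be the upper bound $s(I)\leq4$ for rank-one subsheaves of $\omega_Y$ together with the identification $\Hom(V(k),\omega_Y)=H^0(V^\vee(-k)\otimes\omega_Y)$ for non-locally-free $V$. Both follow because $\omega_Y$ is invertible: a homomorphism into a line bundle factors through the double dual of the source. The window $[3/2,5/2)$ is precisely what makes both vanishing thresholds land at $\pm3$.
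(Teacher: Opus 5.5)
Your proof is correct and follows the same route as the paper. The slope window gives $h^0(V(j))=0$ for $j\leq-3$ and $h^2(V(j))=0$ for $j\geq3$, and then Lemma~\ref{lem:koszul} is applied to $V$ and, via Serre duality, to $V^\vee\otimes\omega_Y$. The only difference is in the second vanishing: you bound the slope of the rank-one image of $V(j)\to\omega_Y$ from both sides, whereas the paper dualizes the Harder--Narasimhan filtration of the reflexive hull to get $\smax(V^\vee\otimes\omega_Y)=4-\smin(V)$; both give the same threshold $j\geq3$.
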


\begin{proof}
A nonzero section of a torsion-free sheaf gives an injection of
$\OO_Y$, so a sheaf with negative maximal slope has no sections.
For $j\geq3$, the maximal slope of $V(-j)$ is less than $5/2-j<0$.
Thus $H^0(V(-j))=0$ for all such $j$, and
Lemma~\ref{lem:koszul} gives the first inequality.

Let $V^D=V^\vee\otimes\omega_Y$.
On the smooth surface $Y$, both $V^{\vee\vee}$ and $V^\vee$ are
locally free, and $V^\vee\simeq(V^{\vee\vee})^\vee$.
The quotient $V^{\vee\vee}/V$ is zero-dimensional, so passage to the
reflexive hull does not change the Harder--Narasimhan slopes.
Dualizing the locally free hull reverses and negates these slopes, so
$\smax(V^\vee)=-\smin(V^{\vee\vee})=-\smin(V)$;
see \cite[Sections 1.1 and 1.6]{HuybrechtsLehn2010}.
Numerical equivalence $K_Y\equiv4H$ gives
$s(\omega_Y)=K_Y\cdot H/d=4$, and hence
\[
 \smax(V^D)=4-\smin(V)\leq\frac52.
\]
Hence $H^0(V^D(-j))=0$ for $j\geq3$.
Since $Y$ is smooth and projective, its canonical sheaf $\omega_Y$
is a dualizing sheaf \cite[Chapter III, Corollary 7.12]{Hartshorne1977}.
The top-cohomology duality for an arbitrary coherent sheaf, namely
the degree-zero case of \cite[Chapter III, Theorem 7.6]{Hartshorne1977},
therefore identifies
\[
 H^2(Y,V(j))^*\simeq
 \Hom_Y(V(j),\omega_Y)=H^0(Y,V^D(-j)).
\]
Apply Lemma~\ref{lem:koszul} to $V^D$ to obtain
\[
 T_+(V)=h^0(V^D(-1))+h^0(V^D(-2))
       \leq h^0(V^D)=h^2(V).
\]
The local freeness of $V^\vee$ used above is automatic on the smooth
surface; no local-freeness assumption on $V$ itself has been imposed.
\end{proof}

\section{Persistence on the punctured spectrum}\label{sec:persistence}

The next estimate keeps track of arbitrary extension data in the
normal direction to $Y$. In particular, none of the connecting maps
in its proof is assumed to be zero.

\begin{proposition}[Fixed-source persistence]\label{prop:persistence}
Let $Y$ be any smooth connected complex projective surface with an
ample globally generated divisor $H$, without imposing
\eqref{eq:main-hypotheses}. Let $\pi:X\to\Spec R$ be the associated
model of Proposition~\ref{prop:cone}, whose construction uses only
these geometric assumptions, and let $\FF$ be a coherent reflexive
sheaf on $X$. Put $V=\FF|_Y$, and assume that the two sums in
\eqref{eq:tails} are finite. Then
\begin{equation}\label{eq:persistence}
 \dim_\C H^1(U,\FF|_U)
 \geq h^1(V)-T_+(V)-T_-(V).
\end{equation}
Specifically, the image of $H^1(X,\FF)$ in this cohomology has
length at least the right-hand side.
\end{proposition}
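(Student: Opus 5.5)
The plan is to compare the fixed group $A:=H^1(X,\FF)$ with two targets. The first is the restriction $H^1(Y,V)$, which we bound from below. The second is the punctured spectrum $H^1(U,\FF|_U)$, where we bound the kernel from above. The two tail sums $T_+$ and $T_-$ of \eqref{eq:tails} will control, respectively, the cokernel of the first map and the kernel of the second. The inequality \eqref{eq:persistence} then follows from the linear-algebra fact
\[
\dim A/K\ \geq\ \dim\im(A\to H^1(V))-\dim K,
\]
where $K=\ker(A\to H^1(U,\FF|_U))$. This fact holds because the image of $A/K$ in $H^1(V)/\im K$ is the image of $A$ modulo a subspace of dimension at most $\dim K$.

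\emph{Step 1 (restriction to $Y$).} Start from the exact sequence
\[
0\to\FF(-Y)\to\FF\to i_*V\to0,
\]
in which the left map is injective because a local equation of $Y$ is a nonzerodivisor on $\FF$. Taking cohomology gives $A\to H^1(V)\to H^2(X,\FF(-Y))$. It therefore suffices to prove $\dim H^2(X,\FF(-Y))\leq T_+(V)$.

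For this, use that $\pi$ is proper over the complete local ring $R$ and that $\OO_X(-Y)=\mm\OO_X$. The theorem on formal functions then identifies $H^2(X,\FF(-Y))$ with $\varprojlim_n H^2(X,\FF(-Y)/\FF(-nY))$. The successive quotients are $\FF(-kY)/\FF(-(k+1)Y)\simeq V(k)$ for $k\geq1$, by \eqref{eq:punctured} and the nonzerodivisor property.

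Since the thickenings $nY$ are proper schemes of dimension $2$, $H^3$ vanishes on each of them. Hence the transition maps on $H^2$ are surjective, and by induction $h^2$ of the $n$-th truncation is at most $\sum_{1\leq k<n}h^2(V(k))$. The inverse limit of a surjective system of finite-dimensional spaces with uniformly bounded dimension has dimension at most that bound. This gives $\dim H^2(X,\FF(-Y))\leq T_+(V)$, and hence
\[
\dim\im(A\to H^1(V))\geq h^1(V)-T_+(V).
\]

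\emph{Step 2 (kernel toward $U$).} The local cohomology sequence gives $K=\im(H^1_Y(X,\FF)\to A)$. Because $\FF$ is torsion-free, $\mathcal H^0_Y(\FF)=0$. The local-to-global spectral sequence then gives $H^1_Y(X,\FF)\simeq H^0(X,\mathcal H^1_Y(\FF))$.

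Next, $\mathcal H^1_Y(\FF)\simeq\varinjlim_n \FF(nY)/\FF$, since $Y$ is Cartier and $\FF$ is torsion-free. Each $\FF(nY)/\FF$ has a filtration with graded pieces $\FF(kY)/\FF((k-1)Y)\simeq V(-k)$ for $1\leq k\leq n$. Left exactness of $H^0$ gives
\[
h^0(\FF(nY)/\FF)\leq\sum_{k=1}^n h^0(V(-k))\leq T_-(V).
\]
Global sections commute with filtered colimits on the Noetherian scheme $X$. Therefore $\dim H^1_Y(X,\FF)\leq T_-(V)$, and so $\dim K\leq T_-(V)$.

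Combining Steps 1 and 2 by the displayed linear-algebra inequality shows that the image of $A$ in $H^1(U,\FF|_U)$ has dimension at least $h^1(V)-T_+(V)-T_-(V)$. No connecting map is assumed to vanish anywhere in this argument.

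The main obstacle I expect is Step 1: the correct use of formal functions for $H^2$ and the resulting dimension bound on the inverse limit. I would try first to prove that $H^2$ of the truncations stabilises. This should follow from Serre vanishing for the $\pi$-ample $\OO_X(-Y)$, which makes $H^2(X,\FF(-nY))=0$ for $n\gg0$, so the limit equals a finite stage directly.
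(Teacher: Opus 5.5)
Your argument is correct, and its skeleton is the paper's. Both proofs fix the source $A=H^1(X,\FF)$ and bound its image in $H^1(V)$ from below via $\len_R H^2(X,\FF(-Y))\le T_+(V)$. Both then bound the kernel toward $U$ by $T_-(V)$, using the filtration of $\FF(nY)/\FF$ with factors $i_*V(-k)$. The two technical steps are carried out differently.

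\textbf{The bound on $H^2(X,\FF(-Y))$.} You use the theorem on formal functions together with $H^3=0$ on the surface $Y$. The paper instead uses Serre vanishing for the $\pi$-ample sheaf $\OO_X(-Y)$ to get $H^2(X,\FF(-JY))=0$, and then descends one twist at a time. That is exactly the fallback you mention at the end. Your version needs only properness and $\mm\OO_X=\OO_X(-Y)$; the paper's avoids inverse limits.

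\textbf{The bound on the kernel.} You identify all of $\ker\bigl(A\to H^1(U,\FF|_U)\bigr)$ with the image of
\[
H^1_Y(X,\FF)=H^0\bigl(X,\mathcal H^1_Y(\FF)\bigr)=\varinjlim_n H^0(X,\FF(nY)/\FF).
\]
This is essentially the colimit formula \eqref{eq:cohomology-colimit}, which the paper deliberately sidesteps (Remark~\ref{rem:fixed-source}). The paper instead proves the \v{C}ech Lemma~\ref{lem:clearing-denominators}. It shows that $H^1(X,\FF(aY))\to H^1(U,\FF|_U)$ is injective once $H^0(V(-n))=0$ for all $n>a$. Hence the whole kernel equals the single finite-stage kernel $K_a$.

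Your route is shorter and needs no cutoff. It uses only the local cohomology sequence, the description of $\mathcal H^1_Y$ for a Cartier divisor, and the commutation of $H^0$ with filtered colimits. The paper's route additionally gives the finite-stage injectivity, which it reuses in Corollary~\ref{cor:finite-stage} with $a=2$.
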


We isolate the localization step before proving the proposition.
All restriction maps below use the canonical trivializations of the
bundles $\OO_X(nY)$ on $U$.

\begin{lemma}[Clearing denominators]\label{lem:clearing-denominators}
Let $X$ be a quasi-compact separated scheme, let $Y\subset X$ be an
effective Cartier divisor, and put $U=X\setminus Y$. Let $\FF$ be a
quasi-coherent sheaf on which local equations of $Y$ act injectively.
If $a\geq0$ is an integer and a class $\xi\in H^1(X,\FF(aY))$ restricts to zero
on $U$, then its image in $H^1(X,\FF(tY))$ is zero for some
integer $t\geq a$.

In particular, put $V=\FF|_Y$ and $L=\OO_X(-Y)|_Y$. If
$H^0(Y,V\otimes L^{-n})=0$ for every $n>a$, then the restriction map
\[
 \gamma_a:H^1(X,\FF(aY))\longrightarrow H^1(U,\FF|_U)
\]
is injective.
\end{lemma}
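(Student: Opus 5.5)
The proof splits into a Čech computation for the first assertion and an exact-sequence argument for the second.

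\emph{First assertion.} Since $X$ is quasi-compact and separated, I compute cohomology of quasi-coherent sheaves by Čech cohomology on a finite affine cover $\{W_k\}$. I shrink the cover so that on each $W_k$ the divisor $Y$ has a single local equation $z_k$. The intersections are affine, and $U\cap W_k=D(z_k)\subset W_k$.

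\begin{itemize}[leftmargin=*]
\item Because local equations act injectively on $\FF$, the maps $\FF(aY)\to\FF(tY)$ for $t\geq a$ are injective.
\item On affines, $\Gamma(W_I\cap U,\FF)=\varinjlim_t\Gamma(W_I,\FF(tY))$. This is the standard description of sections over $D(z)$ as $z$-localizations: quasi-coherence on an affine gives $\Gamma(D(z),\FF)=\Gamma(W,\FF)_z$. Injectivity of $z$ lets me identify this localization with the union of $z^{-t}\Gamma(W,\FF)=\Gamma(W,\FF(tY))$.
\item Hence the Čech complex of $\FF|_U$ for the cover $\{W_k\cap U\}$, which is an affine cover of the separated scheme $U$, is the filtered colimit of the Čech complexes of $\FF(tY)$.
\item Filtered colimits are exact, so $H^1(U,\FF|_U)=\varinjlim_t H^1(X,\FF(tY))$.
\end{itemize}

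A class dying in the colimit therefore dies at some finite stage $t\geq a$, which is the first claim. The transition maps in this colimit are exactly the maps induced by the inclusions, compatible with the canonical trivializations on $U$.

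\emph{Second assertion.} Suppose $\xi\in\ker\gamma_a$. By the first part, its image in $H^1(X,\FF(tY))$ vanishes for some $t$. It therefore suffices to show that each map $H^1(X,\FF((n-1)Y))\to H^1(X,\FF(nY))$ is injective for $n>a$; composing these from $a$ to $t$ then gives $\xi=0$.

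Tensoring $0\to\OO_X((n-1)Y)\to\OO_X(nY)\to\OO_X(nY)|_Y\to0$ with $\FF$ keeps the sequence exact on the left, again because $z$ is injective on $\FF$. The cokernel is
\[
 \FF\otimes\OO_X(nY)|_Y\simeq V\otimes L^{-n}.
\]
The long exact sequence then shows that the kernel of the $H^1$ map is the image of $H^0(Y,V\otimes L^{-n})$, which vanishes by hypothesis for $n>a$.

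\emph{Main obstacle.} The only delicate step is justifying the colimit identification of $H^1(U,-)$, since cohomology commutes with the colimit here only because $X$ is quasi-compact and separated. I handle this through the Čech description above rather than a general limit theorem. The hypothesis $a\geq0$ plays no essential role beyond indexing.
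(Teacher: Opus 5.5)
Your proof is correct and follows essentially the paper's route. Both arguments use a finite affine \v{C}ech cover trivializing $\OO_X(Y)$, identify sections of $\FF|_U$ over each $W_I\cap U$ with $\bigcup_t z^{-t}\Gamma(W_I,\FF)$, use finiteness of the cover to reach a single stage $t$, and then get injectivity from $0\to\FF((n-1)Y)\to\FF(nY)\to i_*(V\otimes L^{-n})\to0$. The only difference is packaging: you use exactness of filtered colimits to obtain the full formula $H^1(U,\FF|_U)\simeq\varinjlim_t H^1(X,\FF(tY))$, whereas the paper lifts the cochain $(b_i)$ explicitly and proves only the kernel statement it needs.
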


\begin{proof}
Choose a finite affine cover $X=\bigcup_{i=1}^m W_i$ trivializing
$\OO_X(Y)$, write $W_i=\Spec A_i$, and let $z_i\in A_i$ be a local
equation of $Y$. Set $M_i=\Gamma(W_i,\FF)$. Then
$W_i\cap U=D(z_i)$ and
$\Gamma(W_i\cap U,\FF|_U)=(M_i)_{z_i}$.
Since $z_i$ acts injectively, the sections of $\FF(tY)$ on $W_i$
identify with $z_i^{-t}M_i\subset(M_i)_{z_i}$ under the canonical
trivialization on $U$; the transition maps are inclusions.

The finite intersections of the $W_i$ are affine, as are their
intersections with $U$. Hence these covers compute quasi-coherent
cohomology by \v{C}ech complexes
\cite[Tags 0BDX and 01XD]{Stacks}.
Represent $\xi$ by a $1$-cocycle $(c_{ij})$. Its vanishing on $U$
means that there are $b_i\in(M_i)_{z_i}$ with
$c_{ij}|_U=b_j-b_i$ on overlaps. Each $b_i$ has a finite denominator
$z_i^{n_i}$. The cover is finite, so choose one
$t\geq\max\{a,n_1,\ldots,n_m\}$. Then the $b_i$ lift to sections
$\widetilde b_i\in\Gamma(W_i,\FF(tY))$.

On $W_i\cap W_j$, the image of $c_{ij}$ minus
$\widetilde b_j-\widetilde b_i$ vanishes after inverting $z_i$.
It is therefore killed by a power of $z_i$, which acts injectively
on the sections of $\FF(tY)$. The difference is zero already on
$W_i\cap W_j$. Thus the image cocycle is a coboundary at the finite
stage $t$, proving the first assertion.

For the second assertion, let $i:Y\hookrightarrow X$ be the inclusion.
For every $s\geq a$, multiplication by the Cartier equation gives
\[
 0\longrightarrow\FF(sY)\longrightarrow\FF((s+1)Y)
 \longrightarrow i_*(V\otimes L^{-(s+1)})\longrightarrow0.
\]
The last term has no global sections, so
$H^1(X,\FF(sY))\to H^1(X,\FF((s+1)Y))$ is injective.
A class in $\ker\gamma_a$ dies at a finite later stage by the first
assertion; since all intervening maps are injective, it is zero.
\end{proof}

\begin{proof}[Proof of Proposition~\ref{prop:persistence}]
For $j\in\Z$, set $\FF_j=\FF(-jY)$. The local equation of $Y$ is
a nonzerodivisor on $\FF$, so there are exact sequences
\begin{equation}\label{eq:restriction-sequence}
 0\longrightarrow\FF_{j+1}\longrightarrow\FF_j
  \longrightarrow i_*V(j)\longrightarrow0.
\end{equation}
For $p>0$, each $H^p(X,\FF_j)$ is a finite $R$-module supported at
$\mm$: properness gives finiteness, and the higher direct images
vanish off $\mm$ because $\pi$ is an isomorphism there. These
cohomology modules therefore have finite length. Since $Y$ is the
scheme-theoretic closed fibre, $\mm$ annihilates $H^p(Y,V(j))$;
its $R$-length is therefore $h^p(V(j))$.

The base $\Spec R$ is affine, so the $\pi$-ample invertible sheaf
$\OO_X(-Y)$ is ample on $X$ by \cite[Tag 01VK]{Stacks}.
Since $\pi$ is proper and $R$ is Noetherian, Serre vanishing gives
$H^p(X,\FF_j)=0$ for every $p>0$ and all sufficiently large $j$;
see \cite[Tag 0B5U]{Stacks}. Choose $J\geq2$ with
$H^2(X,\FF_J)=0$.
For every $j\geq1$, the long exact sequence of
\eqref{eq:restriction-sequence} contains
\[
 H^2(X,\FF_{j+1})\longrightarrow H^2(X,\FF_j)
       \longrightarrow H^2(Y,V(j)).
\]
Exactness at the middle term gives
\[
 \len_R H^2(X,\FF_j)
 \leq\len_R H^2(X,\FF_{j+1})+h^2(V(j)).
\]
No vanishing of $H^3(X,\FF_{j+1})$ is used. Applying this inequality
successively for $j=J-1,J-2,\ldots,1$ yields
\begin{equation}\label{eq:positive-loss}
 \len_R H^2(X,\FF_1)
 \leq\sum_{j=1}^{J-1}h^2(V(j))
 \leq\sum_{j\geq1}h^2(V(j))=T_+(V).
\end{equation}
The case $j=0$ of \eqref{eq:restriction-sequence} contains
\[
 H^1(X,\FF)\longrightarrow H^1(Y,V)
       \longrightarrow H^2(X,\FF_1).
\]
Write $A_1=H^1(X,\FF)$ and let $\delta$ denote the displayed connecting
map. Exactness and \eqref{eq:positive-loss} give
\begin{align}
 \len_R A_1
 &\geq\len_R\im\bigl(A_1\to H^1(Y,V)\bigr)\notag\\
 &=h^1(V)-\len_R\im\delta
 \geq h^1(V)-T_+(V).
 \label{eq:fixed-source-length}
\end{align}

For $s\geq0$, let $\QQ_s=\FF(sY)/\FF$. It has a finite filtration
with factors $i_*V(j)$ for $-s\leq j\leq-1$. The elementary
subadditivity of the dimension of global sections therefore gives
\[
 \len_R H^0(X,\QQ_s)
       \leq\sum_{j=-s}^{-1}h^0(V(j))\leq T_-(V).
\]
In the cohomology sequence of
$0\to\FF\to\FF(sY)\to\QQ_s\to0$, the kernel
\begin{equation}\label{eq:fixed-kernel}
 K_s=\ker\bigl(A_1\longrightarrow H^1(X,\FF(sY))\bigr)
\end{equation}
is an image of $H^0(X,\QQ_s)$. Thus
\begin{equation}\label{eq:negative-loss}
 \len_R K_s\leq T_-(V)\qquad\text{for every }s\geq0.
\end{equation}

Since $T_-(V)$ is finite and its summands are nonnegative integers,
choose $a\geq0$ such that $H^0(Y,V(-n))=0$ for every $n>a$.
The scheme $X$ is quasi-compact and separated, since it is proper
over the affine scheme $\Spec R$, and a local equation of $Y$
acts injectively on $\FF$. Lemma~\ref{lem:clearing-denominators}
therefore shows that
\[
 \gamma_a:H^1(X,\FF(aY))\longrightarrow H^1(U,\FF|_U)
\]
is injective. Let $\beta_a:A_1\to H^1(X,\FF(aY))$ be the natural
map and let $\alpha:A_1\to H^1(U,\FF|_U)$ be restriction. The
canonical trivializations on $U$ give $\alpha=\gamma_a\circ\beta_a$;
hence $\ker\alpha=\ker\beta_a=K_a$. Using
\eqref{eq:fixed-source-length} and \eqref{eq:negative-loss}, we obtain
\[
 \len_R\im\alpha=\len_R A_1-\len_R K_a
 \geq h^1(V)-T_+(V)-T_-(V).
\]
The image is a finite-length submodule of $H^1(U,\FF|_U)$, and its
length equals its $\C$-dimension. This proves \eqref{eq:persistence}.
\end{proof}

\begin{remark}\label{rem:fixed-source}
The standard cohomology-localization formula is
\begin{equation}\label{eq:cohomology-colimit}
 H^1(U,\FF|_U)\simeq\varinjlim_{s\geq0}H^1(X,\FF(sY));
\end{equation}
see \cite[Tag 09MR]{Stacks}. Lemma~\ref{lem:clearing-denominators}
proves its kernel assertion directly, without assuming this formula.
With the cutoff $a$ in the proof, $\ker\alpha=\bigcup_{s\geq0}K_s=K_a$.
Thus \eqref{eq:negative-loss} bounds the entire kernel from the fixed
source $A_1$, not merely the kernels of successive transition maps.
No injectivity of the initial transition maps or surjectivity of
$\gamma_a$ is asserted.
\end{remark}

\begin{corollary}[Finite-stage persistence]\label{cor:finite-stage}
In the setting of Proposition~\ref{prop:persistence}, suppose more
specifically that
\[
 H^0(Y,V(-j))=H^2(Y,V(j))=0\qquad(j\geq3).
\]
Then $H^2(X,\FF(-3Y))=0$, and the natural map
\begin{equation}\label{eq:finite-stage-injection}
 \gamma:H^1(X,\FF(2Y))\longrightarrow H^1(U,\FF|_U)
\end{equation}
is injective. If
$\beta:H^1(X,\FF)\to H^1(X,\FF(2Y))$ is the natural map and
$\alpha=\gamma\circ\beta$, then $\ker\alpha=\ker\beta$ and
\begin{equation}\label{eq:finite-stage-bound}
 \len_R\im\alpha=\len_R\im\beta
 \geq h^1(V)-\sum_{j=1}^{2}\bigl(h^2(V(j))+h^0(V(-j))\bigr).
\end{equation}
\end{corollary}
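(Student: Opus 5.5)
We prove Corollary~\ref{cor:finite-stage} by rerunning the proof of Proposition~\ref{prop:persistence} with its cutoffs made explicit. The hypotheses force the only possibly nonzero terms of $T_-(V)$ to be at $j=-1,-2$, and those of $T_+(V)$ to be at $j=1,2$. Both sums are therefore finite, so the proposition applies; we only have to track the constants $J$ and $a$.

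The first step is the vanishing $H^2(X,\FF(-3Y))=0$, i.e.\ $H^2(X,\FF_3)=0$ with $\FF_j=\FF(-jY)$. For every $j\geq3$, the sequence \eqref{eq:restriction-sequence} gives
\[
 H^2(X,\FF_{j+1})\to H^2(X,\FF_j)\to H^2(Y,V(j))=0,
\]
so each map $H^2(X,\FF_{j+1})\to H^2(X,\FF_j)$ is surjective. By Serre vanishing (Tag 0B5U, with $\OO_X(-Y)$ ample as in the proof of the proposition), $H^2(X,\FF_N)=0$ for $N\gg0$. Descending from $N$ through this chain of surjections gives $H^2(X,\FF_j)=0$ for all $j\geq3$. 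We will also use the consequence that, taking $J=3$ in \eqref{eq:positive-loss}, one gets $\len_R H^2(X,\FF_1)\leq h^2(V(1))+h^2(V(2))$.

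The second step is the injectivity of $\gamma$. The hypothesis $H^0(Y,V(-n))=0$ for $n\geq3$ is exactly the hypothesis of the second assertion of Lemma~\ref{lem:clearing-denominators} with $a=2$, because $V\otimes L^{-n}=V(-n)$. That lemma therefore says $\gamma=\gamma_2$ is injective. Since $\alpha=\gamma\circ\beta$ and $\gamma$ is injective, $\ker\alpha=\ker\beta$, and hence $\len\im\alpha=\len\im\beta$.

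For the bound \eqref{eq:finite-stage-bound}, we repeat the two estimates in the proposition's proof with these cutoffs. From \eqref{eq:fixed-source-length} and the bound on $H^2(X,\FF_1)$ above,
\[
 \len A_1\geq h^1(V)-h^2(V(1))-h^2(V(2)).
\]
Here $\len A_1$ is finite, since $H^1(X,\FF)$ is supported at $\mm$, as noted in the proposition. Next, $\ker\beta$ is the image of $H^0(X,\QQ_2)$, and $\QQ_2$ is filtered with factors $i_*V(-1)$ and $i_*V(-2)$. This gives
\[
 \len\ker\beta\leq h^0(V(-1))+h^0(V(-2)).
\]
Subtracting yields \eqref{eq:finite-stage-bound}.

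The main point needing care is the vanishing of $H^2$ at the finite stage $j=3$. We get it from the descending surjectivity argument rather than from any claim about $H^3$, and we do not need Serre vanishing to hold in any uniform range. Everything else is bookkeeping of the earlier proof.
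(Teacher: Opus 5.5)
Your proposal is correct and takes essentially the same route as the paper. Descending surjectivity from Serre vanishing gives $H^2(X,\FF(-3Y))=0$ and the bound on $\len_R H^2(X,\FF_1)$, Lemma~\ref{lem:clearing-denominators} with $a=2$ gives injectivity of $\gamma$ and $\ker\alpha=\ker\beta$, and the two-step filtration of $\FF(2Y)/\FF$ bounds $\ker\beta$, which you then subtract from the $j=0$ restriction-sequence estimate for $\len_R H^1(X,\FF)$.
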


\begin{proof}
Use $\FF_j=\FF(-jY)$ as in \eqref{eq:restriction-sequence}.
For $j\geq3$, the positive-twist vanishing makes
$H^2(X,\FF_{j+1})\to H^2(X,\FF_j)$ surjective. Serre vanishing
starts the descent at arbitrarily large $j$, so $H^2(X,\FF_3)=0$.
Descending twice more gives
\[
 \len_R H^2(X,\FF_1)\leq h^2(V(1))+h^2(V(2)).
\]
The negative-twist vanishing allows $a=2$ in
Lemma~\ref{lem:clearing-denominators}. Thus $\gamma=\gamma_2$ is
injective, and consequently $\ker\alpha=\ker\beta$.
The quotient $\FF(2Y)/\FF$ has successive factors $i_*V(-1)$ and
$i_*V(-2)$, so
\[
 \len_R\ker\beta\leq h^0(V(-1))+h^0(V(-2)).
\]
Finally the $j=0$ restriction sequence gives
$\len_R H^1(X,\FF)\geq h^1(V)-\len_R H^2(X,\FF_1)$.
Subtracting the kernel bound yields \eqref{eq:finite-stage-bound}.
Only the kernel from the fixed source stabilizes at this explicit
stage; no surjectivity of $\gamma$ is asserted.
\end{proof}

\section{Proposed proof of the main claim}\label{sec:main-proof}

\begin{proof}[Proposed proof of Main claim~\ref{thm:main}]
Proposition~\ref{prop:cone} gives the stated properties of $R$ and
the model $\pi:X\to\Spec R$. Let $M\ne0$ be a finite reflexive
$R$-module of rank $r$. Since $R$ is a domain and $M$ is torsion-free,
$r>0$. Lemma~\ref{lem:extension} supplies a reflexive extension of
its bundle on the punctured spectrum. Apply
Proposition~\ref{prop:normalization}, and denote the resulting
extension again by $\FF$. All changes take place on the fixed model:
$\pi$, $Y$, $L$, and $U$ are unchanged. The sheaf $\FF$ remains
reflexive, so its actual restriction $V=i^*\FF$ is torsion-free by
Lemma~\ref{lem:restriction}, has rank $r$, and satisfies
\eqref{eq:normalized-interval}. No reflexive hull is substituted for
$V$ in the following cohomology estimates.

Proposition~\ref{prop:tails} verifies the vanishing hypotheses of
Corollary~\ref{cor:finite-stage}. With $\alpha,\beta$ as in that corollary,
its finite-stage form of persistence and
Propositions~\ref{prop:euler} and \ref{prop:tails} give
\begin{align*}
 \dim_\C H^1(U,\FF|_U)
 &\geq\len_R\im\alpha=\len_R\im\beta\\
 &\geq h^1(V)-T_+(V)-T_-(V)\\
 &\geq h^1(V)-h^2(V)-h^0(V)\\
 &=-\chi(V)\\
 &\geq c(Y,H)r.
\end{align*}
Every transformation preserved $\FF|_U\simeq\widetilde M|_U$.
The localization sequence for local cohomology on the affine scheme
$\Spec R$ gives
\begin{equation}\label{eq:local-cohomology-identification}
 H^1(U,\widetilde M|_U)\simeq H^2_\mm(M).
\end{equation}
This proves \eqref{eq:main-bound}.

It remains to check that the bound excludes all finite maximal
Cohen--Macaulay modules, not just those initially assumed reflexive.
Suppose $M$ is a nonzero finite module with $\depth_R M=3$.
Then $M$ is Cohen--Macaulay of dimension $3$. For every
$\mathfrak p\in\Ass_R M$, the quotient $R/\mathfrak p$ has
dimension $3$ by \cite[Tag 0BUS]{Stacks}.
Since $R$ is a three-dimensional local domain, this
forces $\mathfrak p=(0)$; a nonzero prime would add a nontrivial step
below any chain to the maximal ideal. Thus $M$ is torsion-free and
has full support. Cohen--Macaulayness localizes
\cite[Tag 0AAG]{Stacks}, so $M$ satisfies $S_2$.
It is therefore reflexive over the normal ring $R$ by
\cite[Tag 0AVB]{Stacks}.

The bound already proved applies to this $M$ and gives
$H^2_\mm(M)\ne0$. This contradicts $\depth_R M=3$. Hence no such
module exists.
\end{proof}

\begin{remark}\label{rem:scope}
The characteristic-zero input enters through Bogomolov's inequality
for semistable sheaves. The proof is not a positive-characteristic
argument with the Frobenius step omitted. It also makes no assertion
that a given local module can be made graded. The bundle on $U$ stays
fixed, but its reflexive extension across $Y$ is deliberately changed.
\end{remark}

\section{An explicit six-line surface}\label{sec:example}

The surface used here is the degree-six member of the six-line
Hirzebruch--Kummer construction. Hirzebruch's line-arrangement surfaces
are discussed in \cite[Section 2]{Hirzebruch1983}; the polarization relevant here
appears in \cite[Remark 4.2 and Corollary 4.3]{Anghel2026}.
We give equations and computations so that the application of
Main claim~\ref{thm:main} does not depend on a graded nonexistence
theorem. In plane coordinates $[x:y:z]$, the arrangement is
\[
 x\,y\,z\,(x-y)(x-z)(y-z)=0.
\]
These are the six lines joining the pairs of the four points
$(1:0:0)$, $(0:1:0)$, $(0:0:1)$, and $(1:1:1)$; they have four triple
points and three double points, the complete-quadrilateral arrangement
of \cite[Section 1.1]{Hirzebruch1983}.
The map from \eqref{eq:W} to this plane is
$[u_0:\cdots:u_5]\mapsto[u_0^6:u_1^6:u_2^6]$.
Its remaining three coordinates are sixth roots of $x-y$, $x-z$, and
$y-z$. Thus the resolved polarized surface is the $n=6$ member of the
second family in \cite[Remark 4.2]{Anghel2026}.

\subsection{The complete intersection and its singular points}
Let $W$ be the surface in \eqref{eq:W}. Its homogeneous coordinate
ring is free of rank $6^3$ over $\C[u_0,u_1,u_2]$, with basis the
monomials $u_3^a u_4^b u_5^c$, $0\leq a,b,c<6$. The equations,
being successive monic equations in different variables, form a
regular sequence. Thus $W$ is a complete intersection of type
$(6,6,6)$.

This coordinate ring is a domain. Indeed, over
$K=\C(u_0,u_1,u_2)$ one adjoins sixth roots of
\[
 u_0^6-u_1^6,\qquad u_0^6-u_2^6,\qquad u_1^6-u_2^6.
\]
The three functions have disjoint sets of irreducible linear factors.
Taking the valuation at one factor of each shows that a product of
their powers can be a sixth power only if each exponent is divisible
by $6$. Kummer theory therefore gives a field extension of degree
$6^3$. The free coordinate ring injects into its generic fibre, which
is this field, proving the assertion. In particular, $W$ is integral.

\begin{lemma}\label{lem:singular-points}
The surface $W$ has exactly $144$ singular points. Each has a completed
local ring isomorphic to
\[
 \C[[x,y,z]]/(x^6+y^6+z^6).
\]
The blow-up of these points is a smooth projective surface $Y$. Its
exceptional curves are pairwise disjoint smooth plane sextics of
self-intersection $-6$.
\end{lemma}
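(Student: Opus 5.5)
We would prove Lemma~\ref{lem:singular-points} by the Jacobian criterion applied to the explicit equations \eqref{eq:W}, followed by a local analytic normal form at each singular point. Write the three equations as $f_1=u_3^6-u_0^6+u_1^6$, $f_2=u_4^6-u_0^6+u_2^6$, $f_3=u_5^6-u_1^6+u_2^6$. Up to the common factor $6u_k^5$ in each entry, the Jacobian matrix has rows
\[
 (-u_0^5,\,u_1^5,\,0,\,u_3^5,\,0,\,0),\quad
 (-u_0^5,\,0,\,u_2^5,\,0,\,u_4^5,\,0),\quad
 (0,\,-u_1^5,\,u_2^5,\,0,\,0,\,u_5^5).
\]
If $u_3,u_4,u_5$ are all nonzero, the last three columns already give rank $3$. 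Hence a singular point needs some $u_{3+k}=0$.

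\emph{Step 1: locate the points.} We do a case analysis on which of $u_3,u_4,u_5$ vanish. Suppose for instance $u_3=0$ while $u_4,u_5\neq0$. The minors built from the columns $u_4,u_5$ together with one of $u_0,u_1,u_2$ then force $u_0^5=u_1^5=0$, which is incompatible with the point lying on $W$ unless further coordinates vanish. We would check carefully that these minors force $u_0=u_1=0$, and then $f_1$ gives $u_3=0$. Two of the $u_{3+k}$ cannot vanish together: two vanishing differences among $u_0^6,u_1^6,u_2^6$ make all three equal, so the third difference $u_1^6-u_2^6$ also vanishes, and this leads back to the rank analysis below.

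Geometrically, the singular points should be exactly the preimages of the three double points $(1:0:0)$, $(0:1:0)$, $(0:0:1)$ of the arrangement, where two lines meet but no $(x-y)$-type factor vanishes. The four triple points, being points where three branch lines meet, would instead give smooth points of $W$ because three independent roots are extracted there. This needs verification. For example, at the triple point $(1:1:1)$ we have $u_3=u_4=u_5=0$, and the rank comes from the first three columns with all of $u_0,u_1,u_2$ nonzero. There the $3\times3$ minor in the first three columns is $\det\begin{pmatrix}-1&1&0\\-1&0&1\\0&-1&1\end{pmatrix}=0$ up to scaling. So that naive guess may be wrong, and the computation has to decide it.

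Rather than trust the geometric intuition, I will simply solve the Jacobian rank conditions explicitly. The singular locus is then a finite union of orbits under the group $\mu_6^6/\mu_6$ acting on the coordinates, which has size $6^5$ orbit-wise. Counting the solutions in each configuration should produce $144=4\cdot36$. This suggests four points of the plane, each with $36$ preimages, i.e.\ the triple points. Over a triple point, $x=y=z$ forces exactly $u_3=u_4=u_5=0$, and the $6^2=36$ choices of $(u_1,u_2)$ up to scaling give the preimages. The same pattern holds at the other triple points such as $(1:0:0)$, where $y=z=y-z=0$.

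\emph{Step 2: local form.} At such a point, for instance $u_0=1$, $u_1=\zeta$, $u_2=\zeta'$, $u_3=u_4=u_5=0$, we use the local coordinates $a=u_1-\zeta$, $b=u_2-\zeta'$. Then $u_0^6-u_1^6$ and $u_0^6-u_2^6$ are analytic units times $a$ and $b$. By the inverse function theorem we can solve $a,b$ analytically in terms of $u_3^6,u_4^6$, with linear parts $a\sim u_3^6$ and $b\sim u_4^6$. Substituting into $f_3$ gives $u_5^6=c_1u_3^6+c_2u_4^6+(\text{higher order})$ with $c_1,c_2\neq0$. A formal coordinate change then yields $x^6+y^6+z^6$. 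The main obstacle is to check that the higher-order terms do not spoil the normal form. Since the relation has the shape $u_5^6=\lambda(u_3^6)+\mu(u_4^6)$ with $\lambda,\mu$ power series of order one, we can absorb the units into sixth roots.

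\emph{Step 3: blow-up.} For a cone over the smooth Fermat sextic $C$, a single blow-up of the vertex is smooth. Its exceptional curve is $C$, and the normal bundle is $\OO_C(-1)$, so its self-intersection is $-\deg C=-6$. The exceptional curves over distinct points are disjoint, and $Y$ is smooth because $W$ is smooth elsewhere.
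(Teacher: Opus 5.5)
Your overall route is the paper's: the Jacobian criterion organized by which coordinates vanish, elimination of two active coordinates by the implicit function theorem, and the blow-up of a cone over a smooth plane sextic. Step 3 is fine. The gap is in Step 1, which is where ``exactly $144$'' must be proved, and as written it contradicts itself.

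In the case $u_3=0$, $u_4,u_5\neq0$ you correctly find that rank $<3$ forces $u_0=u_1=0$. But this is not ``incompatible with the point lying on $W$''. Here $f_1$ holds automatically, and $f_2,f_3$ give $u_4^6=u_5^6=-u_2^6\neq0$. The first row of your Jacobian then vanishes identically, so the $36$ points $[0:0:1:0:\alpha:\beta]$ with $\alpha^6=\beta^6=-1$ are singular.

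If this case contributed nothing, your own remark that two vanishing $u_{3+k}$ force the third would leave only the $36$ points over $(1:1:1)$, not $144$. The figure $4\cdot36$ is instead reached by a heuristic (``this suggests four points of the plane''), not by the case analysis. Moreover, $(1:0:0),(0:1:0),(0:0:1)$ are triple points of the arrangement, not double points; the double points are $(1:1:0),(1:0:1),(0:1:1)$. These three triple points are exactly where the missing $108$ singular points lie.

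The repair is short and is the paper's argument:
\begin{itemize}
\item If exactly $u_3$, exactly $u_4$, or exactly $u_5$ vanishes, rank $<3$ forces the support $\{2,4,5\}$, $\{1,3,5\}$, or $\{0,3,4\}$ respectively. There the first, second, or third Jacobian row vanishes.
\item If $u_3=u_4=u_5=0$, the support is $\{0,1,2\}$. There the first row minus the second plus the third is zero.
\item Each support gives $6^2=36$ points, and the rank is $3$ everywhere else.
\end{itemize}
The paper packages this as the rank of the active columns of the coefficient matrix $C_0$ of the $u_i^6$.

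Your Step 2 treats only the support $\{0,1,2\}$. The paper's uniform device covers all four: a left-kernel vector of the active columns gives a combination of the equations involving only the three inactive coordinates, with nonzero coefficients. Concretely, this is $f_1-f_2+f_3=u_3^6-u_4^6+u_5^6$ at $\{0,1,2\}$, and $f_1$, $f_2$, $f_3$ themselves at $\{2,4,5\}$, $\{1,3,5\}$, $\{0,3,4\}$ respectively. This also removes your worry about higher-order terms. At your point $u_1^6=1-u_3^6$ and $u_2^6=1-u_4^6$ hold exactly, so $f_3$ becomes exactly $u_5^6+u_3^6-u_4^6$.
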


\begin{proof}
Put $v_i=u_i^6$. The coefficient matrix of the three equations in the
$v_i$ is
\begin{equation}\label{eq:matrix}
 C_0=\begin{pmatrix}
 -1&1&0&1&0&0\\
 -1&0&1&0&1&0\\
 0&-1&1&0&0&1
 \end{pmatrix}.
\end{equation}
For a point with active coordinate set $J=\{i:u_i\ne0\}$, the
Jacobian has the rank of the submatrix $(C_0)_J$. The point lies on
$W$ precisely when the kernel of that submatrix contains the vector
of nonzero sixth powers. No two columns of $C_0$ are proportional.
The dependent triples of columns are exactly
\begin{equation}\label{eq:singular-supports}
 \{0,1,2\},\quad\{0,3,4\},\quad
 \{1,3,5\},\quad\{2,4,5\}.
\end{equation}
Their one-dimensional kernels have no zero coordinate. There is no
rank-two set of four or more columns: such a set would contain two
dependent triples sharing two indices, whereas the four triples in
\eqref{eq:singular-supports} do not. These observations exhaust the
rank-deficient supports.

Each support in \eqref{eq:singular-supports} fixes one projective vector
of sixth powers. Taking sixth roots produces $6^2=36$ projective
points for each support, hence $144$ points in total. At each point,
the Jacobian has rank $2$. On an affine chart in which one active
coordinate is fixed, two active coordinates can be eliminated by the
formal implicit function theorem. A linear combination of the three
original equations that kills the active columns leaves a sum of the
sixth powers of the three inactive coordinates, with all coefficients
nonzero. After rescaling these three coordinates, the resulting
completed local equation is $x^6+y^6+z^6$.

A cone over a smooth plane curve is resolved at its vertex by blowing
up the vertex. The exceptional curve is the plane curve itself, with
normal bundle $\OO_C(-1)$. Here the curve is a smooth plane sextic,
so its self-intersection is $-\deg\OO_C(1)=-6$.
These local resolutions at the isolated points glue to the stated
blow-up. The complete intersection is $S_2$ and regular in codimension
one, so it is normal. The resulting surface $Y$ is smooth, projective,
and connected.
\end{proof}

For reference, the residual equations in the proof can be read from
the following exact table. Coordinates in the third column are those
listed in the second column, in the displayed order; nonzero scalar
multiples of a row give the same equation.
\begin{center}
\begin{tabular}{ccc}
\toprule
Active coordinates & Inactive coordinates & Residual coefficients\\
\midrule
$\{0,1,2\}$ & $(3,4,5)$ & $(1,-1,1)$\\
$\{0,3,4\}$ & $(1,2,5)$ & $(-1,1,1)$\\
$\{1,3,5\}$ & $(0,2,4)$ & $(-1,1,1)$\\
$\{2,4,5\}$ & $(0,1,3)$ & $(-1,1,1)$\\
\bottomrule
\end{tabular}
\end{center}

\subsection{Intersection numbers and the canonical divisor}
Write $E=\sum_{i=1}^{144}E_i$ and $A=\rho^*\OO_W(1)$.
The complete-intersection degree and Lemma~\ref{lem:singular-points}
give
\begin{equation}\label{eq:intersection-data}
 A^2=216,\qquad A\cdot E=0,\qquad E^2=-864.
\end{equation}
Therefore, for $H=3A-E$,
\begin{equation}\label{eq:H-square}
 H^2=9\cdot216-864=1080.
\end{equation}

Adjunction for the complete intersection gives
$\omega_W\simeq\OO_W(12)$.
The discrepancy of the blow-up of a degree-six hypersurface cone in
three affine variables is $2-6=-4$: the ambient threefold blow-up
contributes $2E$, and the strict transform of the hypersurface
subtracts $6E$. Applying adjunction on this blow-up gives
\begin{equation}\label{eq:canonical-example}
 K_Y\sim12A-4E=4H.
\end{equation}
This is an equality up to linear equivalence of integral divisors.
In particular, $K_Y^2=16\cdot1080=17280$.

\subsection{The Euler characteristic}
The Koszul resolution of the complete intersection is
\[
 0\longrightarrow\OO_{\PP^5}(-18)
 \longrightarrow\OO_{\PP^5}(-12)^{\oplus3}
 \longrightarrow\OO_{\PP^5}(-6)^{\oplus3}
 \longrightarrow\OO_{\PP^5}
 \longrightarrow\OO_W\longrightarrow0.
\]
Using $\chi(\OO_{\PP^5}(t))=\binom{t+5}{5}$, where the binomial is
interpreted as a polynomial in $t$, one obtains
\begin{equation}\label{eq:chiW}
 \chi(\OO_W)
   =1-3(-1)+3(-462)-(-6188)=4806.
\end{equation}

Let $C$ be a smooth plane sextic, and let
$Z_C=\Tot(\OO_C(-1))$ be the resolution of its affine cone, with affine
projection $p:Z_C\to C$. The canonical bundle of $C$ is
$\omega_C\simeq\OO_C(3)$. The algebra of the total space is split:
\[
 p_*\OO_{Z_C}=\bigoplus_{j\geq0}\OO_C(j).
\]
More explicitly, if $\mathcal J$ is the ideal of the zero section and
$C_n$ is defined by $\mathcal J^{n+1}$, then the induced affine map
$p_n:C_n\to C$ satisfies
\[
 (p_n)_*\OO_{C_n}\simeq\bigoplus_{j=0}^{n}\OO_C(j).
\]
This is the truncated symmetric algebra, and the transition to $C_{n-1}$
is the projection deleting its degree-$n$ summand. In particular,
$H^0(C_n,\OO_{C_n})\to H^0(C_{n-1},\OO_{C_{n-1}})$ is surjective,
and
$H^1(C_n,\OO_{C_n})\simeq\bigoplus_{j=0}^{n}H^1(C,\OO_C(j))$.
The latter inverse system is constant once $n\geq3$, by curve Serre
duality. Consequently the geometric genus of the cone is
\begin{align}\label{eq:pg}
 p_g&=\sum_{j\geq0}h^1(C,\OO_C(j))\notag\\
    &=\sum_{j=0}^3h^0(C,\OO_C(3-j))
      =10+6+3+1=20.
\end{align}
The completed local identifications of
Lemma~\ref{lem:singular-points} identify the corresponding formal
blow-ups and their exceptional thickenings: blowing up the maximal ideal
commutes with flat completion. The pullback of that ideal in the cone
model is $\mathcal J$. The theorem on formal functions
\cite[Chapter III, Theorem 11.1]{Hartshorne1977} therefore identifies the
stable cohomology just computed with the completed local contribution
of $R^1\rho_*\OO_Y$. That contribution has finite length, so its
completion is itself. The additivity above comes from the split cone
model, not from normality alone.
Normality is used separately to give $\rho_*\OO_Y=\OO_W$; the fibres
have dimension at most one, so there are no higher contributions. The Leray Euler characteristic
therefore gives
\begin{equation}\label{eq:chiY}
 \chi(\OO_Y)=4806-144\cdot20=1926.
\end{equation}
This agrees with the six-line covering calculation in
\cite[Remark 4.2 and Corollary 4.3]{Anghel2026}. In particular,
\[
 K_Y^2=17280,\qquad c_2(Y)=12\cdot1926-17280=5832,\qquad
 \sigma(Y)=17280-8\cdot1926=1872.
\]
These are also the numbers displayed for $n=6$ in the cited
Corollary~4.3.

\subsection{Global generation and ampleness of the polarization}

\begin{lemma}\label{lem:polarization}
The divisor $H=3A-E$ is globally generated and ample.
\end{lemma}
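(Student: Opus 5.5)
The plan is to produce explicit sections of $\OO_W(2)$ that cut out the singular set $Z$ of $W$ scheme-theoretically, as a reduced set. From these we get global generation of $2A-E$, and hence of $H=A+(2A-E)$. Ampleness then follows from Nakai--Moishezon.

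\emph{Step 1: the quadrics.} By Lemma~\ref{lem:singular-points}, $Z$ consists of the $144$ points whose active coordinate sets are the four triples in \eqref{eq:singular-supports}. Their inactive sets are $\{3,4,5\},\{1,2,5\},\{0,2,4\},\{0,1,3\}$. Every index lies in exactly two of these four sets, and the complementary pairs are $(0,5),(1,4),(2,3)$. Hence each of the quadrics
\[
 q_1=u_0u_5,\qquad q_2=u_1u_4,\qquad q_3=u_2u_3
\]
contains a variable from every inactive set, so each vanishes on $Z$.

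\emph{Step 2: base locus and local generation.} I would first show that $V(q_1,q_2,q_3)\cap W=Z$ as sets. A common zero must have one vanishing coordinate from each pair, which gives eight coordinate triples to kill.
\begin{itemize}[leftmargin=*]
\item If the surviving indices form one of the four dependent triples in \eqref{eq:singular-supports}, the equations \eqref{eq:W} restricted to that triple give exactly the $36$ points of $Z$ with that support.
\item For the other four triples, such as $\{3,4,5\}$ or $\{0,1,3\}$, the restricted equations force all remaining coordinates to vanish, so there are no points.
\end{itemize}
This is a finite check using the matrix \eqref{eq:matrix}.

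Next, at a point of $Z$ with inactive set, say, $\{3,4,5\}$, the coordinates $u_0,u_1,u_2$ are nonzero. So $q_1,q_2,q_3$ are unit multiples of $u_5,u_4,u_3$. The proof of Lemma~\ref{lem:singular-points} shows that the inactive coordinates generate the maximal ideal of the completed local ring $\C[[x,y,z]]/(x^6+y^6+z^6)$. Therefore $q_1,q_2,q_3$ generate the maximal ideal $\mm_p$ of $\OO_{W,p}$, and $I_Z(2)$ is globally generated by the $q_k$.

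\emph{Step 3: pulling back.} Since $\rho$ is the blow-up of the reduced points of $Z$, we have $\mm_p\OO_Y=\OO_Y(-E_p)$. I expect this step to be the main technical point. For the cone singularity, the blow-up of the vertex is the resolution described in Lemma~\ref{lem:singular-points}, and the inverse image ideal is invertible by the universal property. Consequently $\rho^{-1}(I_Z)\cdot\OO_Y=\OO_Y(-E)$. The surjection $H^0(I_Z(2))\otimes\OO_W\to I_Z(2)$ then pulls back to a surjection onto $\OO_Y(2A-E)$, so $2A-E$ is globally generated. The line bundle $A$ is globally generated as a pullback of $\OO_W(1)$, so $H=A+(2A-E)$ is globally generated.

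\emph{Step 4: ampleness.} We have $H^2=1080>0$ by \eqref{eq:H-square}. For an irreducible curve $C\subset Y$ there are two cases.
\begin{itemize}[leftmargin=*]
\item If $C=E_i$, then $H\cdot E_i=-E_i^2=6>0$, using $A\cdot E_i=0$.
\item Otherwise $\rho(C)$ is a curve, so $A\cdot C=\deg\rho(C)>0$. Since $2A-E$ is globally generated, it is nef, and $H\cdot C\geq A\cdot C>0$.
\end{itemize}
Nakai--Moishezon then gives ampleness. As a check, $E\cdot C\geq0$ in the second case, which is consistent with the above.
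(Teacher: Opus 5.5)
Your proposal is correct and follows essentially the same route as the paper. Both use the quadrics $u_0u_5,u_1u_4,u_2u_3$ vanishing to first order along $E$, deduce global generation of $2A-E$ and hence of $H=A+(2A-E)$, and conclude by Nakai--Moishezon from $H^2>0$, $H\cdot E_i=6$ and the nefness of $2A-E$. The only difference is minor: you use all three quadrics as generators of $\mm_p$ and pull back the surjection onto $I_Z(2)$ via $\mm_p\OO_Y=\OO_Y(-E_p)$, whereas the paper checks directly that the first two residual sections already have no common zero, both off $E$ and on each exceptional sextic.
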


\begin{proof}
The line bundle $A$ is globally generated. Consider the three quadratic
coordinate products
\[
 u_0u_5,\qquad u_1u_4,\qquad u_2u_3.
\]
At every singular point of $W$, each product has exactly one inactive
factor. On the resolution that factor vanishes to first order along
the exceptional curve. Dividing the pullbacks by the canonical section
of $\OO_Y(E)$ thus gives three global sections of $2A-E$.

We show that the first two of these sections have no common zero.
Away from the exceptional divisor, a common zero would have
$u_0u_5=u_1u_4=0$. The four possibilities obtained by choosing one
factor from each product force, using the defining equations, an
active support among the four in \eqref{eq:singular-supports}.
Thus all such points of $W$ are singular.
On any exceptional sextic, the leading terms of the two products
are nonzero constants times two distinct inactive coordinates.
Two coordinates cannot vanish simultaneously on the diagonal plane
sextic: its remaining sixth-power coefficient is nonzero. Hence the
residual sections have no common zero on the exceptional curves either.
It follows that $2A-E$ is globally generated. Also,
\[
 (2A-E)^2=4A^2-4A\cdot E+E^2=864-0-864=0.
\]
Thus this nef divisor is not big. The first two residual sections span
a base-point-free pencil inside $|2A-E|$; this does not mean that the
complete linear system is a pencil. Indeed, the three displayed
quadratic products are linearly independent, since the homogeneous
ideal of the complete intersection $W$ has no degree-two part.
Dividing their pullbacks by the same exceptional factor preserves this
independence. If the residual sections are denoted $s_0,s_1,s_2$, their
relation is
\[
 s_0^6-s_1^6+s_2^6=0,
\]
not a linear relation: it follows from
$x(y-z)-y(x-z)+z(x-y)=0$ with
$x=u_0^6$, $y=u_1^6$, $z=u_2^6$.

Consequently $H=A+(2A-E)$ is globally generated, and both summands are
nef. For an irreducible curve $C$ not contracted by $\rho$, the ample
bundle $\OO_W(1)$ has positive degree on its image, so $A\cdot C>0$.
Since $2A-E$ is nef,
\[
 H\cdot C=A\cdot C+(2A-E)\cdot C\geq A\cdot C>0.
\]
The only contracted curves are the $E_i$, and
\[
 H\cdot E_i=-E_i^2=6>0.
\]
Finally, $H^2=1080>0$. The Nakai--Moishezon criterion for a projective
surface now gives ampleness; see \cite[Chapter V, Theorem 1.10]{Hartshorne1977}.
\end{proof}

\subsection{Application to the completed section ring}

\begin{proof}[Proof of Consequence~\ref{cor:example}]
Lemmas~\ref{lem:singular-points} and \ref{lem:polarization}, together
with \eqref{eq:H-square}, \eqref{eq:canonical-example}, and
\eqref{eq:chiY}, verify all hypotheses of Main claim~\ref{thm:main}.
The numerical margin is
\[
 c(Y,H)=\frac{15}{8}\cdot1080-1926=2025-1926=99.
\]
Main claim~\ref{thm:main} gives \eqref{eq:99} and the asserted
nonexistence. Taking $M=R$ gives $H^2_\mm(R)\ne0$.
Normality implies $\depth R\geq2$, so in fact $\depth R=2$.
The punctured spectrum is regular by Proposition~\ref{prop:cone},
and hence the singularity is isolated.

We also check the quasi-Gorenstein assertion. The graded canonical
module of a full section ring of a smooth projective surface is
\[
 \omega_B=\bigoplus_{t\in\Z}H^0(Y,\omega_Y\otimes L^t).
\]
This description does not require $B$ to be Cohen--Macaulay. It
follows from the degreewise identification of the top local cohomology
with $H^2(Y,L^t)$ and Serre duality; compare
\cite[Corollary 4.3]{Anghel2026}. Since $\omega_Y\simeq L^4$ and
negative powers of $L$ have no sections, it gives
$\omega_B\simeq B(4)$. After localization and completion the grading
shift is forgotten, so $\omega_R\simeq R$. Thus $R$ is
quasi-Gorenstein. It is not Gorenstein, because its depth is $2$.
\end{proof}

\subsection{Comparison with cyclic-deficiency criteria}
Let $E_R(\C)$ be an injective hull of the residue field, and put
\[
 K^2(R)=\Hom_R(H^2_\mm(R),E_R(\C)).
\]
This is the second deficiency module, which is finite by local duality.
The next observation does not use Main claim~\ref{thm:main} or the
asserted nonexistence conclusion.

\begin{lemma}\label{lem:noncyclic-deficiency}
For the explicit surface and ring in Consequence~\ref{cor:example},
\[
 \mu_R(K^2(R))\geq h^1(Y,\OO_Y)\geq10,
\]
where $\mu_R$ denotes the minimal number of generators. In particular,
$K^2(R)$ is not cyclic; moreover, $\len_RH^2_\mm(R)\geq234$.
\end{lemma}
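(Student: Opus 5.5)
The plan is to make everything explicit through the graded structure of $B$ and then pass to the completion. Since $H^2_\mm(R)$ has finite length (Lemma~\ref{lem:finite-deficiency}), and local cohomology commutes with the flat maps $B\to B_\bb\to R$, we have
\[
 H^2_\mm(R)\simeq H^2_\bb(B)\otimes_B R\simeq H^2_\bb(B).
\]
Here the last isomorphism holds because a finite-length module supported at $\bb$ is unchanged by completion. By Proposition~\ref{prop:cone} and the localization sequence on the punctured cone,
\[
 H^2_\bb(B)\simeq H^1(X_0\setminus Y,\OO)\simeq\bigoplus_{t\in\Z}H^1(Y,L^t),
\]
with the degree-$t$ piece equal to $H^1(Y,L^t)$. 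The same description appears for $\omega_B$ in the proof of Consequence~\ref{cor:example}.

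\textbf{Step 1: which degrees contribute.} Kodaira vanishing gives $H^1(Y,L^t)=0$ for $t<0$. Since $L^t\simeq\omega_Y\otimes L^{t-4}$ by $K_Y\sim4H$, it also gives $H^1(Y,L^t)=0$ for $t\geq5$. Hence only $0\leq t\leq4$ contribute, and Serre duality gives $h^1(L^t)=h^1(L^{4-t})$. In particular the top nonzero degree of $H^2_\bb(B)$ is at most $4$, and its degree-$4$ piece is dual to $H^1(Y,\OO_Y)$.

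\textbf{Step 2: generators of $K^2(R)$.} Matlis duality turns $H^2_\bb(B)$ into the graded module $K^2(B)=\bigoplus_t H^1(Y,L^t)^*$, with $H^1(Y,L^t)^*$ placed in degree $-t$. Its lowest degree is $-4$, with piece $H^1(Y,L^4)^*\simeq H^1(Y,\OO_Y)$. Because $\bb$ raises degrees, this lowest piece injects into $K^2/\bb K^2$. Therefore
\[
 \mu_R(K^2(R))=\mu_B(K^2(B))\geq h^1(Y,\OO_Y),
\]
where the equality holds because completion of a finite-length module changes neither the module nor $\mu$.

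\textbf{Step 3: $h^1(\OO_Y)\geq10$ (the main obstacle).} In Lemma~\ref{lem:polarization} the three residual sections $s_0,s_1,s_2$ of $2A-E$ have no common zero and satisfy $s_0^6-s_1^6+s_2^6=0$. They therefore define a morphism $f:Y\to C$ onto the Fermat sextic $C\subset\PP^2$, which is smooth of genus $10$. The image is a curve, not a point, because $s_0$ and $s_1$ are linearly independent. Since $f$ is surjective, $f^*:H^0(C,\Omega_C)\to H^0(Y,\Omega_Y)$ is injective. Hodge symmetry then gives
\[
 h^1(\OO_Y)=h^0(\Omega^1_Y)\geq g(C)=10.
\]
The point to check carefully is that the morphism really lands in $C$ and is nonconstant; an alternative route would be an eigenspace count for the Kummer group, but the fibration argument is cleaner.

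\textbf{Step 4: the length bound.} The length bound $\len_R H^2_\mm(R)\geq234$ only needs the degree-$2$ piece. By Riemann--Roch with $K_Y\equiv4H$,
\[
 \chi(L^2)=\chi(\OO_Y)+\tfrac12(4-8)H^2=1926-2160=-234.
\]
Hence $h^1(L^2)\geq-\chi(L^2)=234$, and this piece sits inside $H^2_\mm(R)$ by the decomposition above.
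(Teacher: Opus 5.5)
Your proposal is correct, and Steps~1, 2 and~4 follow the paper up to minor reformulation. You apply Kodaira vanishing directly for $t<0$, where the paper passes through Serre duality. Your Step~2 is the Matlis dual of the paper's argument: the paper notes that $\mathcal H_4\simeq H^1(Y,\omega_Y)$ is killed by $B_{>0}$, so it lies in the socle, and then uses $\mu_R(K^2(R))=\dim_\C\operatorname{Soc}_RH^2_\mm(R)$. The paper also records the sharper $h^1(L^2)=2h^0(L^2)+234$, which is not needed.

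The genuine difference is Step~3. The paper uses the projection $[u_0:u_1:u_3]$ from $W$ to the sextic $v_2^6-v_0^6+v_1^6=0$. This is only a rational map, so the paper needs a new local check at the $36$ singular points with active support $\{2,4,5\}$ to extend it to $Y$. It then bounds $h^1(\OO_Y)$ algebraically, using Stein factorization, the Leray injection $H^1(C',\OO_{C'})\hookrightarrow H^1(Y,\OO_Y)$, and the trace splitting of $\OO_C\to h_*\OO_{C'}$.

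You instead use the map given by the residual sections $s_0,s_1,s_2$ of $2A-E$. Lemma~\ref{lem:polarization} has already shown that $s_0,s_1$ have no common zero, that the sections are linearly independent, and that $s_0^6-s_1^6+s_2^6=0$. So you obtain a nonconstant, hence surjective, morphism onto a smooth plane sextic of genus $10$ with no further local analysis, which is a real economy. In exchange, your last step relies on generic smoothness and Hodge symmetry $h^1(\OO_Y)=h^0(\Omega^1_Y)$, that is, on Hodge theory, whereas the paper's Leray--trace argument stays purely algebraic. Both are valid over $\C$.
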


\begin{proof}
Let $C\subset\PP^2$ be the smooth plane sextic
$v_2^6-v_0^6+v_1^6=0$, of genus $10$.
The rational map
\[
 W\dashrightarrow C,\qquad
 [u_0:\cdots:u_5]\longmapsto[u_0:u_1:u_3]
\]
is undefined precisely at the singular points with active support
$\{2,4,5\}$. At each such point, $u_0,u_1,u_3$ are local generators
of the maximal ideal, as in the local calculation in
Lemma~\ref{lem:singular-points}. Their pullbacks to the blow-up have
common exceptional factor of order one, and their residual sections
have no common zero on the exceptional sextic. Thus the map extends
to a morphism $f:Y\to C$. On each of these exceptional curves it
is the identity in the displayed plane coordinates, so $f$ is
surjective.

Factor $f$ as $Y\xrightarrow{g}C'\xrightarrow{h}C$ by Stein
factorization. Here $C'$ is a smooth projective curve,
$g_*\OO_Y=\OO_{C'}$, and $h$ is finite.
The Leray sequence injects $H^1(C',\OO_{C'})$ into $H^1(Y,\OO_Y)$.
In characteristic zero, $\operatorname{tr}_h/\deg(h)$ splits the
inclusion $\OO_C\to h_*\OO_{C'}$. Consequently
$h^1(Y,\OO_Y)\geq g(C)=10$.

Put $\mathcal H=H^2_\bb(B)$. The graded local-cohomology description
of the full section ring gives
$\mathcal H_t\simeq H^1(Y,L^t)$.
For $t>4$, the identity $L^t\simeq\omega_Y\otimes L^{t-4}$ and
Kodaira vanishing give $\mathcal H_t=0$; see
\cite[Chapter III, Remark 7.15, pp.\ 248--249]{Hartshorne1977}.
For $t<0$, first use Serre duality to obtain
\[
 H^1(Y,L^t)^*\simeq H^1(Y,\omega_Y\otimes L^{-t})
                  \simeq H^1(Y,L^{4-t})=0,
\]
where the last equality is the positive-twist case just proved, since
$4-t>4$. Thus $\mathcal H_t=0$ outside $0\leq t\leq4$.
All its graded pieces are finite-dimensional, so $\mathcal H$ has
finite length, and localization and completion identify it with
$H^2_\mm(R)$. For every $a>0$, multiplication by $B_a$ sends
$\mathcal H_4$ into $\mathcal H_{4+a}=0$. Its degree-four component is
therefore annihilated by all of $B_{>0}$ and has dimension
\[
 \dim_\C\mathcal H_4=h^1(Y,\omega_Y)=h^1(Y,\OO_Y).
\]
It is therefore contained in the socle of $H^2_\mm(R)$.
Duality for finite-length modules gives
$\mu_R(K^2(R))=\dim_\C\operatorname{Soc}_R H^2_\mm(R)$,
proving the generator bound.
Finally, surface Riemann--Roch gives
$\chi(L^2)=\chi(\OO_Y)-2H^2=-234$, and Serre duality with
$\omega_Y\simeq L^4$ gives $h^2(L^2)=h^0(L^2)$. The degree-two
summand therefore yields the independent estimate
\[
 \len_R H^2_\mm(R)\geq h^1(Y,L^2)
 =2h^0(Y,L^2)+234\geq234.
\]
\end{proof}

\begin{remark}\label{rem:existence-criteria}
Shimomoto--Tavanfar \cite[Theorem 3.2]{ShimomotoTavanfar2023}
prove an existence theorem for quasi-Gorenstein deformations of
three-dimensional quasi-Gorenstein Buchsbaum local rings with
$\len H^2=1$. For our three-dimensional ring the deformation sequence
would be empty, and this length-one hypothesis fails by
Lemma~\ref{lem:noncyclic-deficiency}.
Xie's criterion \cite[Theorem 1.1]{Xie2026} assumes a dualizing complex,
depth one below the dimension, and a cyclic deficiency module.
Although the first two conditions hold here, the cyclicity condition
fails by the same lemma. These are failures of specific sufficient
hypotheses, not alternative proofs of nonexistence.

The graded criterion of
\cite[Corollary 4.5]{ShimomotoTavanfar2023} is a separate,
positive-characteristic result over an $F$-finite field, formulated
there for projective varieties of dimension at least $3$.
It does not apply to the present complex surface. In particular,
its characteristic restriction must not be confused with the scope
of their characteristic-free local theorem just discussed.
\end{remark}

\section{Scope, provenance, and verification}\label{sec:scope}

The proposed conclusion concerns nonzero finite modules of arbitrary
rank over the completed local domain. It is stronger than graded
nonexistence, and it does not follow merely from the latter.
The known polarized surface is credited to the sources indicated in
Section~\ref{sec:example}; the additional local argument is the
combination of Propositions~\ref{prop:normalization},
\ref{prop:euler}, \ref{prop:tails}, and \ref{prop:persistence}.

Two possible shortcuts are deliberately avoided. First, the restriction
of a reflexive extension to $Y$ is only asserted to be torsion-free,
not locally free. Second, the direct-limit argument controls kernels
from a fixed cohomology module, rather than asserting that cohomology
commutes with a degeneration to a graded module. These distinctions
are needed for the intended non-graded conclusion.

The exact check reproduced in Appendix~\ref{app:checks} checks finitely
many matrix ranks, coordinate supports, and numerical identities. It does not quantify over modules,
verify the sheaf-theoretic lemmas, or provide a formal proof certificate.
This version is circulated for independent mathematical verification of
Main claim~\ref{thm:main} and its proposed proof. The auxiliary results
are stated separately, and Consequence~\ref{cor:example} is explicitly
conditional on the main claim. Compilation and successful execution of
the finite checks do not certify the main claim.

\appendix
\section{Exact checks for the explicit model}\label{app:checks}

The following finite procedure provides a reproducible check of the
coordinate-support classification in \eqref{eq:singular-supports}.
For each of the $63$ nonempty subsets $J$ of the columns of
\eqref{eq:matrix}, compute $(C_0)_J$ and its kernel over $\mathbb Q$.
A linear subspace over the infinite field $\C$ contains a vector with
every coordinate nonzero if and only if it is not contained in any
coordinate hyperplane. Thus a support is relevant precisely when every
coordinate functional is nonzero on the kernel. Retain those supports
for which the matrix has rank less than $3$. The result consists of the
four triples in \eqref{eq:singular-supports}. The one-dimensional left
kernel then gives the residual coefficients listed in
Section~\ref{sec:example}.

The same calculation checks the polynomial Euler characteristics in
\eqref{eq:chiW}, the plane-curve sum in \eqref{eq:pg}, and the tuple
\[
 (A^2,E^2,H^2,K_Y^2,c_2(Y),\chi(\OO_Y),c(Y,H))
 =(216,-864,1080,17280,5832,1926,99).
\]
There is also a covering-stratification check of $c_2(Y)$, using the
six-line construction of \cite[Section 2.2]{Hirzebruch1983}. For a
covering of degree $N=6^5=7776$, with ten rational branch components
and fifteen double points on the blown-up plane, the strata have
Euler characteristics $2$, $-10$, and $15$. The respective numbers
of sheets are $N$, $N/6$, and $N/36$. Hence
\[
 c_2(Y)=2N-10\frac N6+15\frac N{36}=5832,
\]
consistent with $12\chi(\OO_Y)-K_Y^2$.

\subsection*{A self-contained exact-check listing}
The listing below combines the finite checks in one place. It is ordinary
Python~3 code using SymPy for exact rational and symbolic algebra; it
reads and writes no files. Copying the listing into a Python session
with SymPy installed is sufficient to run the check. The listing is
printed for reproducibility and is not executed when this manuscript
is compiled. None of its assertions verifies the general
sheaf-theoretic argument or quantifies over modules.

\begin{lstlisting}
from itertools import combinations
import sympy as sp

C = sp.Matrix([
    [-1,  1, 0, 1, 0, 0],
    [-1,  0, 1, 0, 1, 0],
    [ 0, -1, 1, 0, 0, 1],
])
expected = [(0, 1, 2), (0, 3, 4),
            (1, 3, 5), (2, 4, 5)]
records, examined = [], 0
for size in range(1, 7):
    for active in combinations(range(6), size):
        examined += 1
        sub = C[:, active]
        kernel = sub.nullspace()
        meets_torus = bool(kernel) and all(
            any(v[j] != 0 for v in kernel)
            for j in range(size)
        )
        rank = sub.rank()
        if not (meets_torus and rank < 3):
            continue
        inactive = tuple(i for i in range(6)
                         if i not in active)
        left = sub.T.nullspace()
        assert size == 3 and rank == 2
        assert len(kernel) == len(left) == 1
        relation = left[0].T * C
        assert all(relation[0, i] == 0 for i in active)
        coeff = tuple(relation[0, i] for i in inactive)
        assert all(c != 0 for c in coeff)
        records.append((active, inactive, coeff))
assert examined == 63
assert [rec[0] for rec in records] == expected
points = len(records) * 6**2
assert points == 144

def chi_p5(t):
    # Polynomial binomial coefficient, also for negative t.
    return sp.prod(t + i for i in range(1, 6)) / sp.Integer(120)

chi_W = sum((-1)**j * sp.binomial(3, j) * chi_p5(-6*j)
            for j in range(4))
pg = sum(sp.binomial(5-j, 2) for j in range(4))
chi_Y = chi_W - points * pg
A2, E2 = 6**3, -6 * points
H2 = 9*A2 + E2
K2 = 16*H2
c2 = 12*chi_Y - K2
margin = sp.Rational(15, 8)*H2 - chi_Y
sigma = K2 - 8*chi_Y
assert margin == (sigma-H2)/8
assert 4*A2 + E2 == 0
assert sigma == 1872
assert (chi_W, pg, chi_Y) == (4806, 20, 1926)
assert (A2, E2, H2, K2, c2, chi_Y, margin) == (
    216, -864, 1080, 17280, 5832, 1926, 99
)

# Independent arithmetic route, conditional on the branch data.
N = 6**5
c2_cover = 2*N - 10*(N//6) + 15*(N//36)
H2_cover = sp.Rational(5*N, 36)
chi_cover = (16*H2_cover + c2_cover) / 12
assert (N, c2_cover, H2_cover, chi_cover) == (
    7776, 5832, 1080, 1926
)
assert c2_cover == c2 and chi_cover == chi_Y

# Check the sixth-power relation for the opposite-edge sections.
p, q, t = sp.symbols("p q t")
assert sp.expand(p*(q-t) - q*(p-t) + t*(p-q)) == 0

# Check the two elementary polynomial identities used in the text.
x = sp.symbols("x")
assert sp.expand(1926 + 540*(x*x-4*x)
                 - (-234 + 540*(x-2)**2)) == 0
a0, a1, a2 = sp.symbols("a0 a1 a2")
assert sp.expand((a0-a1-a2)
                 - ((a0-2*a1+a2)+(a1-2*a2))) == 0
for active, inactive, coeff in records:
    print("support:", active, "residual:", inactive, coeff)
print("supports checked:", examined, "singular points:", points)
print("(A2,E2,H2,K2,c2,chi,margin):",
      tuple(map(int, (A2, E2, H2, K2, c2, chi_Y, margin))))
print("Finite checks passed; no general proof certificate.")
\end{lstlisting}

All matrix ranks, kernel calculations, and polynomial identities in
this listing are exact. Its numerical assertions use the geometric
interpretations proved in Section~\ref{sec:example}; they do not
independently establish those interpretations.

\section*{Declaration of generative AI assistance}
ChatGPT (OpenAI) was used as a research and writing aid in developing and
critically examining the mathematical arguments, preparing the computational
checks in Appendix~\ref{app:checks}, identifying relevant literature, and
organizing, drafting, and editing the manuscript. This assistance extended
beyond language polishing. The author takes full responsibility for all
content, including the mathematical arguments, computations, code, and
references. No AI system is an author, and AI-generated outputs do not
constitute independent verification of the mathematical results.

\section*{Data and code availability}
No empirical dataset is used. All finite input data, expected numerical
results, and the complete exact-check listing are contained in
Appendix~\ref{app:checks}. No supplementary file is required to read or
compile this manuscript. The geometric and sheaf-theoretic arguments
are given in the body of the paper and are not replaced by these checks.

\enlargethispage{2\baselineskip}

\end{document}